\documentclass[12pt]{article} 
\usepackage[utf8]{inputenc}
\usepackage[english]{babel}
\usepackage[T2A]{fontenc} 

\usepackage[left=2cm, right=2cm, top=2cm, bottom=2cm]{geometry}
\usepackage{amsmath}
\usepackage{mathrsfs} 
\usepackage{amssymb,amsthm}
\usepackage{graphicx, color}
\usepackage{float}
\usepackage{graphicx} 
\graphicspath{ {./images/} }

\newtheorem{df}{Definition}
\newtheorem{theorem}{Theorem}

\newtheorem{hypothesis}{Conjecture}

\begin{document}
\begin{center}
{\bf Quasi-Feynman formulas\\ that provide fast converging Chernoff approximations\\ to solution of parabolic differential equation on the real line\\
Ivan D. Remizov,$^{1,2}$ Alexandr V. Vedenin$^1$}\\ 
ivremizov@yandex.ru, lcsndr@mail.ru
\end{center}

\begin{center}
$^1$HSE University, Nizhny Novgorod, Russia

$^2$IITP RAS, Moscow, Russia
\end{center}

{\bf Abstract.} We construct explicit approximations to the solution of a second-order parabolic partial differential equation on the real line with variable coefficients. The method is based on Chernoff's product formula and uses a new operator-valued function defined through proper Riemann integrals over a bounded interval, which makes the approach readily usable in numerical practice. For sufficiently smooth initial data and coefficients, we prove that the resulting Chernoff approximations converge uniformly in space and time with a quadratic rate, improving the standard first-order estimate. The construction yields a new class of quasi-Feynman formulas that are neither grid-based nor Galerkin-type, but instead rely on semigroup theory and multiple bounded integrals. The theoretical findings are validated by symbolic computation, and the paper contributes both refined error bounds and a practical analytical tool for parabolic problems.

\vskip2mm

\textbf{Keywords:} Cauchy problem for parabolic PDE, approximation of solution, rate of convergence to the solution, multiple proper Riemann integral, one-parameter semigroup of operators, Chernoff product formula 

\vskip2mm
\textbf{MSC2020:} primary 65M15, secondary 47D06, 35K15

\tableofcontents

\section{Introduction}

Theory and practice of solving partial differential equations (PDEs) numerically is an old branch of applied mathematics. Finding solution with appropriate accuracy is extremely important in almost all areas of advanced contemporary engineering, experimental and theoretical physics and chemistry, in particular, in construction of materials, projecting of buildings and devices. The literature on this topic is abundant: almost every large publishing house has in its portfolio a book called Numerical methods for partial differential equations, see \cite{EBY, M2016, book2018, R2016}. All this abundance grows from the fact that it is difficult and important to solve partial differential equations, and there is no numerical method which is applicable to all useful equations. This is why research in this direction is far from ending. 

This paper is devoted to derivation of a formula that expresses arbitrary accurate approximations to solution of PDE with variable coefficients explicitly in terms of these coefficients that play a role of parameters in the problem. This can be considered as a new method for solving parabolic PDEs with variable coefficients.

At the present time we know of a relatively small number of situations where a short formula expresses the solution of a partial differential equation with variable coefficients in terms of these coefficients and initial/boundary conditions. However, with the use of Chernoff's theorem (see original paper by Paul Chernoff \cite{Chernoff}, textbook  \cite{EN2000} or Theorem \ref{Chernth} below) it is possible to write such formulas for approximations to the solution. This fact gives a birth to a new branch of numerical methods which are neither grid-based, nor Galerkin, nor Monte-Carlo, nor iterative, but something in between and at the same time something apart, with the rich flavour of functional analysis. There is a very good recent overview \cite{Butko2020} describing the results and history of the method of Chernoff approximations. 

Employing this method we construct a sequence of functions (Chernoff approximations) which converges (uniformly in $x\in\mathbb{R}$, $t\in[0,t_0]$ for arbitrary $t_0>0$) to the exact solution of the Cauchy problem for diffusion-type second order parabolic equation given by
\begin{equation}\label{firsteq}
\left\{ \begin{array}{ll}
   u'_{t}(t,x)=a(x)u_{xx}''(t,x)+b(x)u'_x(t,x) +c(x)u(t,x)\stackrel{denote}{=}(Hu(t,\cdot))(x),  \\
   u(0,x)=u_0(x).
\end{array} \right.
\end{equation}

Where $x\in\mathbb{R}$, $t\geq0$, $u\colon [0,+\infty)\times \mathbb{R}\to \mathbb{R}$, and functions $a,b,c,u_0\colon\mathbb{R}\to\mathbb{R}$ are bounded and uniformly continuous. We also assume that $a(x)>a_0\equiv\mathrm{const}>0$ for all $x\in\mathbb{R}$, and first and second derivatives of $a,b,c$ are bounded and uniformly continuous as functions $\mathbb{R}\to\mathbb{R}$. 

This paper is dedicated to deriving formulas presented in Theorem \ref{Fform} and Theorem \ref{firstform} that give the solution of (\ref{firsteq}) in terms of $a$, $b$, $c$, $u_0$ assuming that the second order differential operator $H$ is an infinitesimal generator of the $C_0$-semigroup $\left(e^{tH}\right)_{t\geq 0}$ (later we will explain meaning of these words in the Preliminaries section). This assumption is a  standard case in studies of class of evolution equations that the examined equation (\ref{firsteq}) belongs to. According to  general theory of $C_0$-semigroups \cite{EN2000} this assumption implies that the solution of the Cauchy problem (\ref{firsteq}) exists, is bounded and uniformly continuous with respect to $x$ for each $t$, depends on $u_0$ continuously, and can be represented in a form $u(t,x)=\left(e^{tH}u_0\right)(x)$, we will discuss this in more words below in Theorem \ref{semig}. The last equality means that if one knows $e^{tH}$ for each $t>0$ then (\ref{firsteq}) is solved (in the appropriate sense) for each $u_0$. We apply the Chernoff theorem to a specially constructed family of linear bounded operators $(S(t))_{t\geq 0}$ defined as proper Riemann integrals of functions containing $a$, $b$, $c$. This allows us to express $e^{tH}$ in terms of $a$, $b$, $c$, reaching the proposed goal. The aim of the paper do not include detailed discussion of the problem of finding the class of functions in which the solution to (\ref{firsteq}) is unique under certain assumptions on functions $a$, $b$, $c$, $u_0$, but we keep in mind that for the heat equation there are known unbounded solutions. To have high speed of convergence of our approximations we ask additionally that initial condition $u_0$ has bounded uniformly continuous derivatives up to order 8, and coefficients $a,b,c$ have bounded uniformly continuous derivatives up to order 6. 

For (\ref{firsteq}) there are known Chernoff approximations for the solution in terms of shift operators \cite{R-AMC, R-JMP2019} and in terms of improper Riemann integral operators \cite{BGS2010} but they are difficult to use in practice, meanwhile our approximations are better designed for real-world applications.

An important question is how fast the error decreases in the approximation expression $e^{tH}u_0\approx S(t/n)^nu_0$ provided by the Chernoff theorem as $n$ tends to infinity. For a particular case of Schr\"odinger equation this question was discussed in \cite{OSS2012}, but generally this issue is not well developed at the present moment. It was mentioned in \cite{VVGKR2020} that the speed of convergence depends sufficiently on the initial condition, and for some semigroups there are Chernoff functions providing arbitrary slow convergence of approximations. The notion of approximation subspace was proposed in \cite{VVGKR2020} to study this issue, and some numerical experiments were performed recently in \cite{Prud2020, Drag2023SVMO, KNR2026}.

There is a recent improvement \cite{Zag-JFA2020} in direction of research where the convergence $S(t/n)^n\to e^{tH}$ as $n\to \infty$ is studied in operator norm hence convergence $S(t/n)^nu_0\to e^{tH}u_0$ as $n\to \infty$ is uniform with respect to initial condition $u_0$ from unit sphere in $\mathcal{F}$, but examples from \cite{ Prud2020,VVGKR2020, GR1, GR2} show that only convergence $S(t/n)^nu_0\to e^{tH}u_0$ as $n\to \infty$ on each vector $u_0$ is what we should expect in general case.  However, in \cite{GR1, GR2} conditions were found that provide estimate from above on the norm of approximation error that appears in the Chernoff theorem. We will use results of \cite{GR1,GR2} as one of the technical tools, see Theorem \ref{teorApprDU2}.

See also research \cite{Gom2019,GomTom2019,Gom2014} (and references therein) in a more general setting than Chernoff's theorem. The same question arises for the Trotter product formula $e^{A+B}=\lim_{n\to\infty}(e^{A/n}e^{B/n})^n$ which we do not use but which is a particular case of Chernoff's theorem for operator-valued Chernoff function  $G(t)=e^{tA}e^{tB}$, see \cite{ Zag-2, Zag-3,Zag-1}.

The main result of the present paper is Theorem \ref{firstform}.

\section{Preliminaries}

We employ extensively the language and methods of functional analysis and $C_0$-semigroup theory, information on these topics can be found e.g. in classical textbooks \cite{BS-RFA2020,EN2000,HF}. This list is not exhaustive but each of these books is sufficient to understand well all the contents of the paper. Nevertheless, in this section we recall some basic definitions and facts to fix the notation, make the paper self-contained, and highlight the ideas that we will use. All information that is given in Preliminaries section without references is available in \cite{BS-RFA2020,EN2000,HF}, the best choice is probably \cite{EN2000}.

\subsection{$C_0$-semigroups}

Let $\mathcal{F}$ be a real Banach space. Let $\mathscr{L}(\mathcal{F})$ be a set of all bounded linear operators in $\mathcal{F}$. Suppose we have a mapping $V\colon [0,+\infty)\to \mathscr{L}(\mathcal{F}),$ i.e., $V(t)$ is a bounded linear operator $V(t)\colon \mathcal{F}\to \mathcal{F}$ for each $t\geq 0.$ The mapping $V$ is called a \textit{$C_0$-semigroup}, or \textit{a strongly continuous one-parameter semigroup of operators}, if it satisfies the following conditions: 
	
1) $V(0)$ is the identity operator $I$, i.e., for each $\varphi\in \mathcal{F}$ we have $V(0)\varphi=\varphi;$ 
	
2) $V$ maps the addition of numbers in $[0,+\infty)$ into a composition of operators in $\mathscr{L}(\mathcal{F})$, i.e., for all $t\geq 0$ and all $s\geq 0$ we have $V(t+s)=V(t)\circ V(s),$ where $(A\circ B)(\varphi)=A(B(\varphi))=AB\varphi$;
	
3) $V$ is continuous with respect to the strong operator topology in $\mathscr{L}(\mathcal{F})$, i.e., for all $\varphi\in \mathcal{F}$ function $t\longmapsto V(t)\varphi$ is continuous as a mapping from $[0,+\infty)$ to $\mathcal{F}.$

It is known that if $(V(t))_{t\geq 0}$ is a $C_0$-semigroup in Banach space $\mathcal{F}$ then the set $$\left\{\varphi\in \mathcal{F}: \exists \lim_{t\to +0}\frac{V(t)\varphi-\varphi}{t}\right\}\stackrel{denote}{=}D(L)$$ is a dense linear subspace in $\mathcal{F}$. The operator $L$ defined on the domain $D(L)$ by the equality $$L\varphi=\lim_{t\to +0}\frac{V(t)\varphi-\varphi}{t}$$ is called \textit{an infinitesimal generator} (or just \textit{generator} for short) of the $C_0$-semigroup $(V(t))_{t\geq 0}$. The generator $(L,D(L))$ is a closed linear operator that defines the $C_0$-semigroup $(V(t))_{t\geq 0}$ uniquely, and notation $V(t)=e^{tL}$ is used. If $L$ is a bounded operator and $D(L)=\mathcal{F}$, then $e^{tL}$ is indeed the exponent defined by the power series $e^{tL}=\sum_{k=0}^\infty\frac{t^kL^k}{k!}$ converging with respect to the norm topology in $\mathscr{L}(\mathcal{F})$. In most interesting cases the generator is an unbounded differential operator, e.g., Laplacian $\Delta$ is the generator of a multidimensional heat semigroup, and operator $H$ is the generator of semigroup providing solutions to (\ref{firsteq}) in the sense that will be explained below. 

\subsection{Abstract Cauchy problem for evolution equation in Banach space}

One of the reasons for the study of $C_0$-semigroups is their connection with differential equations. We will start from the so-called abstract Cauchy problem (\ref{aCp}) and then explain how it is related to (\ref{firsteq}). Consider Banach space $\mathcal{F}$, vector $u_0\in\mathcal{F}$, linear operator $L\colon D(L)\to\mathcal{F}$ with domain $D(L)\subset\mathcal{F}$, and state the following abstract Cauchy problem: find function $U\colon [0,+\infty)\to\mathcal{F}$ such that 
\begin{equation}\label{aCp}
\left\{ \begin{array}{ll}
U'(t)=LU(t) \ \mathrm{ for }\ t\geq 0,\\
U(0)=u_0.
\end{array} \right.
\end{equation}

Equality $U'(t)=LU(t)$ is called linear evolution equation in $\mathcal{F}$ with operator $L$. Function $U$ is called a classical solution to (\ref{aCp}) iff $U$ has continuous derivative with respect to norm in $\mathcal{F}$, $U(t)\in D(L)$ for all $t\geq 0$ and (\ref{aCp}) holds. Function $U$ is called a mild solution to (\ref{aCp}) iff $U$ is continuous with respect to norm in $\mathcal{F}$, and for all $t\geq 0$ we have $\int\limits_0^tU(s)ds\in D(L)$ and $U(t)=u_0+L\int\limits_0^tU(s)ds$. 

In elementary example provided by  one-dimensional Banach space  $\mathcal{F}=\mathbb{R}$ we have $D(L)=\mathbb{R}$, $u_0\in\mathbb{R}$, and $L$ is a real number which also can be considered as an operator of multiplication by $L$. Then (\ref{aCp}) is a standard ordinary differential equation which is  known to have solution $U(t)=e^{tL}u_0$ both in mild and classical sense which is easy to check by direct computation. It is beautiful and amazing that in case of infinite-dimensional space $\mathcal{F}$ this fact holds true (with some technical details added) and even the formula $U(t)=e^{tL}u_0$ continues to work properly. The following statement gives exact meaning to this emotional comment.

\begin{theorem} \label{semig}
(Proposition 6.2 in \cite{EN2000}, p. 145) 
If $\mathcal{F}$ is a Banach space and $(L,D(L))$ is the generator of $C_0$-semigroup $(e^{tL})_{t\geq 0}$ in $\mathcal{F}$ then:

1. For all $u_0\in D(L)$ there exists unique classical solution to (\ref{aCp}) given by the formula $U(t)=e^{tL}u_0$;

2. For all $u_0\in \mathcal{F}$ there exists unique mild solution to (\ref{aCp}) given by the formula $U(t)=e^{tL}u_0$.
\end{theorem}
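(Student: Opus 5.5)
We will derive both parts from the standard facts about a $C_0$-semigroup $T(t)=e^{tL}$ with generator $(L,D(L))$. There are three such facts. First, $T(t)D(L)\subset D(L)$ and $\frac{d}{dt}T(t)u_0=LT(t)u_0=T(t)Lu_0$ for $u_0\in D(L)$. Second, for every $u_0\in\mathcal{F}$ we have $\int_0^tT(s)u_0\,ds\in D(L)$ and $L\int_0^tT(s)u_0\,ds=T(t)u_0-u_0$. Third, $\|T(t)\|\le Me^{\omega t}$. Each follows directly from the definition of the generator and the semigroup law.

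For the first fact, write
\[
\frac{T(h)-I}{h}\,T(t)u_0=T(t)\frac{T(h)-I}{h}u_0\to T(t)Lu_0 .
\]
This gives $T(t)u_0\in D(L)$ and the right derivative. For the left derivative, write
\[
\frac{T(t)u_0-T(t-h)u_0}{h}=T(t-h)\frac{T(h)u_0-u_0}{h},
\]
and use local boundedness of $\|T\|$ together with strong continuity. The derivative $T(t)Lu_0$ is continuous, so $U(t)=T(t)u_0$ is a classical solution. For the second fact, compute the difference quotient
\[
\frac{T(h)-I}{h}\int_0^tT(s)u_0\,ds=\frac1h\int_t^{t+h}T(s)u_0\,ds-\frac1h\int_0^hT(s)u_0\,ds\to T(t)u_0-u_0 .
\]
This shows that $U(t)=T(t)u_0$ is a mild solution for every $u_0\in\mathcal{F}$. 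Existence in both parts is then done.

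Uniqueness is the main point. Since every classical solution is also mild (integrate $U'=LU$ and use closedness of $L$), it suffices to show uniqueness of mild solutions. By linearity, we take a mild solution $U$ with $u_0=0$. Set $V(t)=\int_0^tU(s)\,ds$. Then $V\in C^1$, $V(t)\in D(L)$, and $V'(t)=U(t)=LV(t)$, so $V$ is a classical solution with $V(0)=0$. Fix $t>0$ and define $w(s)=T(t-s)V(s)$ for $s\in[0,t]$. Differentiating with the product rule gives
\[
w'(s)=-T(t-s)LV(s)+T(t-s)V'(s)=0 .
\]
Hence $V(t)=w(t)=w(0)=T(t)V(0)=0$. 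Thus $V\equiv0$, and so $U=V'\equiv0$.

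The main obstacle is justifying the product rule for $w$. One has to split the difference quotient of $w$ into a term involving $T(t-s-h)\bigl(V(s+h)-V(s)\bigr)/h$ and a term involving $\bigl(T(t-s-h)-T(t-s)\bigr)V(s)/h$. The first term is handled by uniform boundedness of $T$ on $[0,t]$ and strong continuity. The second uses $V(s)\in D(L)$ and the derivative formula from the first fact.
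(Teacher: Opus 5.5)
Your proof is correct and is essentially the standard argument behind the result, which the paper does not prove but only cites (Engel--Nagel, Propositions II.6.2 and II.6.4): existence comes from the basic generator identities, and uniqueness comes from applying $s\mapsto T(t-s)V(s)$ to the classical solution $V(t)=\int_0^t U(s)\,ds$ built from a mild solution, with only the order of the classical/mild reduction reversed. The one small imprecision is that the bound $\|T(t)\|\le Me^{\omega t}$, and hence the local boundedness you use, needs the uniform boundedness principle, not just the definition of the generator and the semigroup law.
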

Recall also that $e^{tL}$ is a linear bounded operator, so for each $t\geq 0$ solution $U(t)=e^{tL}u_0$ depends on $u_0$ linearly and continuously with respect to norm in $\mathcal{F}$.
Now let us explain how initial-value problems similar to (\ref{firsteq}) and even more general are related with (\ref{aCp}).

\subsection{Cauchy problem for partial differential equation (PDE) of evolution type and its connection to abstract Cauchy problem}

In general situation, if $Q$ is a set (in (\ref{firsteq}) we have $Q=\mathbb{R}$), then the function $u\colon [0,+\infty)\times Q\to \mathbb{R}$, $u\colon (t,x)\longmapsto u(t,x)$ of two variables $(t,x)$ can be considered as a function $U\colon t\longmapsto [x\longmapsto u(t,x)]=u(t,\cdot)$ of one variable $t$
with values in the space of real-valued functions of variable $x$. Let $\mathcal{F}\subset\mathbb{R}^Q$ be a space of (not necessarily all) real-valued functions on $Q$. Suppose that $\mathcal{F}$ is a Banach space with respect to some appropriate norm, and $(L,D(L))$ is a closed densely defined linear operator in $\mathcal{F}$. If $U(t)=u(t,\cdot)\in\mathcal{F}$ for all $t\geq 0$ then one can define $Lu(t,x)=(Lu(t,\cdot))(x)=(LU(t))(x).$ With these definitions it is clear that the Cauchy problem 
\begin{equation}\label{aCpforPDE}
\left\{ \begin{array}{ll}
u'_t(t,x)=Lu(t,x) \ \mathrm{ for }\ t\geq 0, x\in Q\\
u(0,x)=u_0(x)\ \mathrm{ for } \ x\in Q
\end{array} \right.
\end{equation}
is exactly (\ref{aCp}) with the only one difference: derivative $u'_t(t,x)$ in (\ref{aCpforPDE}) above is taken in $\mathbb{R}$ pointwise for each $x\in Q$, meanwhile in (\ref{aCp}) the derivative $U'(t)$ is taken with respect to norm in $\mathcal{F}$: $$u'_t(t,x)=p(t,x)\iff \lim_{\varepsilon\to0}\left|\frac{u(t+\varepsilon,x)-u(t,x)}{\varepsilon}-p(t,x)\right|=0\textrm{ for all }x\in Q,$$ 
$$U'(t)=P(t)\iff \lim_{\varepsilon\to0}\left\|\frac{U(t+\varepsilon)-U(t)}{\varepsilon}-P(t)\right\|=0.$$

If there exists a $C_0$-semigroup $(e^{tL})_{t\geq 0}$ then thanks to theorem \ref{semig} Cauchy problem (\ref{aCp}) has unique classical solution for each $u_0\in D(L)$ and unique mild solution for each $u_0\in\mathcal{F}$. If one selects appropriate definition of solution to (\ref{aCpforPDE}) that meets the above-mentioned difference in understanding the time derivative, then solution to (\ref{aCpforPDE}) is made of solution to (\ref{aCp}), i.e., function $u(t,x)=(U(t))(x)=(e^{tL}u_0)(x)$ solves (\ref{aCpforPDE}) in the appropriate sense. 

Setting (\ref{aCpforPDE}) is very general because the role of $L$ can be played, e.g., by arbitrary differential operator with derivatives of any order and coefficients that depend on $x$ but do not depend on $t$. This covers the case (\ref{firsteq}) in particular. Also $L$ may be a pseudo-differential operator. If $L=-i\mathcal{H}$, where $\mathcal{H}$ is the Hamiltonian of a quantum system then (\ref{aCpforPDE}) is a Schr\"odinger equation. Variable $x$ can run over $\mathbb{R}$, $\mathbb{R}^n,$ manifold, ramified surface, infinite-dimensional space or a subset in these spaces, examples can be found in \cite{Butko2020}.

\subsection{Assumptions and notation that we use}\label{asssect}

Now it is time to introduce the case which we study in the paper. In our consideration $Q=\mathbb{R}$, and $\mathcal{F}=UC_b(\mathbb{R})$ is the space of all bounded and uniformly continuous real-valued functions that are defined on $\mathbb{R}$. We consider $\mathcal{F}=UC_b(\mathbb{R})$ with the uniform norm $\|f\|=\sup_{x\in\mathbb{R}}|f(x)|$. It is easy to check that $UC_b(\mathbb{R})$ is a closed subset of the space $C_b(\mathbb{R})$ of all bounded real-valued functions defined on $\mathbb{R}$. It is a standard fact that $C_b(\mathbb{R})$ is a Banach space with respect to uniform norm, so our $\mathcal{F}=UC_b(\mathbb{R})$ is also a Banach space. We also will use notation
$$
UC_b^n(\mathbb{R})=\{f\in UC_b(\mathbb{R})\;|\;f',f'',\dots,f^{(n)}\in UC_b(\mathbb{R})\}\subset\mathcal{F}=UC_b(\mathbb{R})\textrm{ and } C_b^\infty(\mathbb{R})=\bigcap\limits_{n=1}^\infty UC_b^n(\mathbb{R}).
$$
It is proved in \cite{R-JMP2019} that $C_b^\infty(\mathbb{R})$ is dense in $UC_b(\mathbb{R})$. In our consideration $L=H$, i.e., $$(Lf)(x)=(Hf)(x)=a(x)f''(x)+b(x)f'(x)+c(x)f(x)$$ 
for each point $x\in\mathbb{R}$ and each function $f\in UC_b^\infty(\mathbb{R})$. The domain $D(L)=D(H)$ is defined via the following assumption.

\textbf{Assumption 1.}
We assume that operator $H$ defined above on the set $C_b^\infty(\mathbb{R})$ is closable and its closure $(H,D(H))$ generates a $C_0$-semigroup $(e^{tH})_{t\geq 0}$ in $\mathcal{F}=UC_b(\mathbb{R})$ endowed with the uniform norm $\|f\|=\sup_{x\in\mathbb{R}}|f(x)|$. In other words: $(H,D(H))$ is a closed linear operator that generates a $C_0$-semigroup, and $C_b^\infty(\mathbb{R})$ is a core for $(H,D(H))$.

This assumption is a condition on properties of function $a,b,c$ and usually is not very  restrictive. For example, it holds for the case $a(x)\equiv 1, b(x)\equiv c(x)\equiv 0$ when (\ref{firsteq}) is the heat equation. Also it holds if $a,b,c\in C_b^\infty(\mathbb{R})$ and there exists such $\varepsilon_0>0$ that $a(x)\geq\varepsilon_0$ for all $x\in\mathbb{R}$. We will use Theorem \ref{teorApprDU2} which also states that Assumption 1 holds for the case that we study. 

Domain of the operator which is a closure of some other operator is usually difficult to express explicitly, but at least we know that $C_b^\infty(\mathbb{R})\subset D(L)$. In practice of solving (\ref{firsteq}) the set $D(L)$ is not important because the solution to (\ref{firsteq}) is given by $u(t,x)=(e^{tL}u_0)(x)$, and, independently of how big the set $D(L)$ is, $e^{tL}$ is a linear bounded operator defined everywhere on $\mathcal{F}$. So, with the use of Theorem \ref{semig}, we see that important thing is that $C_0$-semigroup $(e^{tL})_{t\geq 0}$ exists because it gives the solution to (\ref{aCp}) hence the solution to (\ref{firsteq}).

The last thing that we need to discuss here is the time derivative. Suppose that $U(t)$ is the mild or the classical  solution of (\ref{firsteq}) in $\mathcal{F}=UC_b(\mathbb{R})$; what does this mean for $u$ defined as $u(t,x)=(U(t))(x)$? For the mild solution of (\ref{firsteq}) it is required that $U(t)$ is continuous in $t$, which means that $u(t,x)$ is continuous in $t$ uniformly with respect to $x\in\mathbb{R}$. Also $U(t)=u(t,\cdot)\in\mathcal{F}$ which means that $u(t,x)$ is bounded and uniformly continuous in $x$ for each $t\geq 0$. Condition $U(t)=u_0+H\int\limits_0^tU(s)ds$ means that the integral $\int\limits_0^tu(s,x)ds$ exists uniformly with respect to $x\in\mathbb{R}$ and $[x\longmapsto \int\limits_0^tu(s,x)ds]\in D(H)$ where $D(H)$ is defined in the Assumption 1.

Suppose now that $U(t)$ is the classical solution of (\ref{firsteq}). Suppose that we have some function  $p(t)\in\mathcal{F}$, i.e. $p(t,x)$ is bounded and uniformly continuous in $x$ for each $t\geq 0$, and $p(t)=U'(t)$. This means that $0=\lim_{\varepsilon\to0}\left\|\frac{U(t+\varepsilon)-U(t)}{\varepsilon}-p(t)\right\|=\lim_{\varepsilon\to0}\sup_{x\in\mathbb{R}}\left|\frac{u(t+\varepsilon,x)-u(t,x)}{\varepsilon}-p(t,x)\right|$, so $(U'(t))(x)=u'_t(t,x)=p(t,x)$ exists for each fixed $x\in\mathbb{R}$, and, moreover function  $u'_t(t,x)$ is bounded and uniformly continuous in $x$ for each $t\geq 0$. The limit expression above means exactly that $u(t,x)$ is differentiable in $t$ uniformly in $x\in\mathbb{R}$. Finally, it follows from the definition of a classical solution, that $U'(t)$ is continuous with respect to the norm in $\mathcal{F}$, so for each $t\geq 0$ we have $0=\lim_{\varepsilon\to0}\|U'(t+\varepsilon)-U'(t)\|=\lim_{\varepsilon\to0}\sup_{x\in\mathbb{R}}|u'_t(t+\varepsilon,x)-u'_t(t,x)|$ which means exactly that $u_t'(t,x)$ is continuous in $t$ uniformly in $x\in\mathbb{R}$ for each $t\geq 0$. 

Now we have introduced everything which is needed to define the solution of (\ref{firsteq}) which we are eager to find approximately via Chernoff approximations.

\subsection{Setting of the problem that is solved in the paper}
\textbf{Problem 1.}
For the Cauchy problem (\ref{firsteq})
$$
\left\{ \begin{array}{ll}
   u'_{t}(t,x)=a(x)u_{xx}''(t,x)+b(x)u'_x(t,x) +c(x)u(t,x)\stackrel{denoted}{=}(Hu(t,\cdot))(x),  \\
   u(0,x)=u_0(x).
\end{array} \right.
$$
we wish to find (approximately with estimates on the approximation error):

1. Mild solution, i.e. function $u(t,x)$ which for each $t\geq 0$ is: bounded, uniformly continuous in $x$,  continuous in $t$ uniformly with respect to $x\in\mathbb{R}$, and satisfies $u(t,x)=u_0(x)+\left(a(x)\frac{\partial^2}{\partial x^2}+b(x)\frac{\partial}{\partial x}+c(x)\right)\int\limits_0^tu(s,x)ds$ where the integral exists uniformly with respect to $x\in\mathbb{R}$, and $[x\longmapsto \int\limits_0^tu(s,x)ds]\in D(H)$ where $D(H)$ is defined in the Assumption 1. 

2. Classical solution, i.e. function $u(t,x)$ which for each $t\geq 0$ is: bounded, uniformly continuous in $x$, continuous and differentiable in $t$ uniformly with respect to $x\in\mathbb{R}$. Moreover $u_t'(t,x)$ is continuous in $t$ for each $t\geq 0$ uniformly with respect to $x\in\mathbb{R}$, $[x\longmapsto u(t,x)]\in D(H)$ where $D(H)$ is defined in the Assumption 1, and $u'_t(t,x)=\left(a(x)\frac{\partial^2}{\partial x^2}+b(x)\frac{\partial}{\partial x}+c(x)\right)u(t,x)dx$ holds.  

\textbf{Comment on the problem.} Both solutions mentioned in items 1 and 2 above are given by the formula $u(t,x)=(e^{tH}u_0)(x)$ thanks to general theory of $C_0$-semigroups and introductive considerations made above; see propositions 6.3 and 6.4 in the book \cite{EN2000}. So Problem 1 is reduced to problem of approximating of $e^{tH}$ with controlled approximation error. The key instrument for doing this is Theorem \ref{teorApprDU2}. 

\textbf{What do we do to solve this problem.} In terms of linear bounded operators $S(t)$ introduced in Theorem \ref{firstform} below, we will construct a sequence of functions $u_n(t,x)=(S(t/n)^nu_0)(x)$ which for each $t\geq 0$ and each $n=1,2,3,\dots$ are: bounded, uniformly continuous in $x$,  continuous in $t$ uniformly with respect to $x\in\mathbb{R}$. For each $t_0>0$ sequence $u_n(t,x)$ satisfies
$$
\lim_{n\to\infty}\sup_{t\in[0,t_0]}\sup_{x\in\mathbb{R}}|u_n(t,x)-u(t,x)|=0
$$
where $u(t,x)$ is the classical solution of (\ref{firsteq}) for all $u_0\in D(H)$, and $u(t,x)$ is the mild solution of (\ref{firsteq}) for all $u_0\in UC_b(\mathbb{R})$. Moreover, we prove the following estimate on the rate of convergence:
$$\sup_{t\in[0,t_0]}\sup_{x\in\mathbb{R}}|u_n(t,x)-u(t,x)|=O(1/n^2) \textrm{ as } n\to\infty$$ 
at least for all  $u_0\in UC_b^8(\mathbb{R})$ and $a,b,c\in UC_b^6(\mathbb{R})$ satisfying Assumption 1. 


\section{Main result}
\subsection{Approach proposed (the method of Chernoff approximation)}
To reach the aim of the paper we use the method of Chernoff approximation \cite{Butko2020}. Let us explain what it is.

\begin{df}\label{CT} 
(First introduced in \cite{R-JFA2016}) 
Let us say that operator-valued function $G$ is \textit{Chernoff-tangent} to operator $L$ iff the following conditions of Chernoff tangency (CT) hold: 

(CT0). Let $\mathcal{F}$ be a Banach space, and $\mathscr{L}(\mathcal{F})$ be a space of all linear bounded operators in $\mathcal{F}$. Suppose that we have an operator-valued function $G\colon [0, +\infty) \to \mathscr{L}(\mathcal{F})$, or, using other words, we have a family $(G(t))_{t\geq 0}$ of linear bounded operators in $\mathcal{F}$. Closed linear operator $L\colon D(L) \to \mathcal{F}$ is defined on the linear subspace $D(L)\subset\mathcal{F}$ which is dense in  $\mathcal{F}$.

(CT1). Function $G$ is continuous at 0 in the strong operator topology in  $\mathscr{L}(\mathcal{F})$, i.e.  $\lim_{t\to 0}\|G(t)f-G(0)f\|=0$ for each $f \in \mathcal{F}$.

(CT2). $G(0) = I,$ i.~e. $G(0)f=f$ for each $f \in \mathcal{F}$.

(CT3). There exists such a linear subspace $\mathcal{D} \subset \mathcal{F}$ that it is dense in  $\mathcal{F}$ and for each $f \in \mathcal{D}$ there exists a limit $\lim_{t \to 0}(G(t)f-f)/t$; let us denote the value of this limit as   $G'(0)f$.

(CT4). Closure of the operator $(G'(0), \mathcal{D})$ exists and is equal to $(L, D(L))$.
\end{df}

Division of (CT) into (CT0)-(CT1) helps to check these subconditions separately because they are usually proved by methods of different nature. The condition of Chernoff tangency implies that $G(t)f=f+tLf+o(t)$ as $t\to 0$ for all $f$ from the space which is a core for $(L,D(L))$. With the notion of Chernoff tangency the classic Chernoff theorem can be stated in the following form.

\begin{theorem}\label{Chernth}
(\textsc{P.\,R.~Chernoff, 1968}; cf. original paper  \cite{Chernoff}, theorem 5.2 in  \cite{EN2000} and theorem 10.7.21 in \cite{BS-RFA2020}.) 

In the notation of the above definition suppose that $L$ and $G$ satisfy:
	
(E). There exists a $C_0$-semigroup $(e^{tL})_{t\geq 0}$ and its generator is $(L,Dom(L))$.
	
(CT). The function $G$ is Chernoff-tangent to operator $(L,Dom(L))$.

(N). There exists $\omega\in\mathbb{R}$ such that $\|G(t)\|\leq e^{\omega t}$ for all $t\geq 0$.(\ref{CTform})

Then for each $f\in \mathcal{F}$ and each $T>0$ we have 
\begin{equation}\label{CTform}
\lim_{n\to\infty}\sup_{t\in[0,t_0]}\left\|G(t/n)^nf - e^{tL}f\right\|=0,
\end{equation}
where $G(t/n)^n$ is a composition of $n$ copies of linear bounded operator $G(t/n)$.
\end{theorem}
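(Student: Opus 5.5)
The plan is the classical Chernoff argument: compare $G(t/n)^n$ with $e^{tL}$ through the resolvent of a bounded generator, using the Chernoff lemma. Replacing $G(t)$ by $e^{-\omega t}G(t)$ and $L$ by $L-\omega I$, I may assume $\|G(t)\|\le 1$ for all $t\geq 0$. Both sides of (\ref{CTform}) transform consistently, since $(e^{-\omega t/n}G(t/n))^n=e^{-\omega t}G(t/n)^n$ and $e^{t(L-\omega)}=e^{-\omega t}e^{tL}$, and (CT) is preserved. Then $\|e^{tL}\|\le 1$ by the Hille--Yosida theorem, because $G$ tangent to $L$ forces $L$ dissipative. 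Alternatively, it suffices to work with $\|e^{tL}\|\le Me^{\omega' t}$, which is all that is needed below.

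\textbf{Step 1 (Chernoff's lemma).} For a contraction $T$ and each $f$ I will show
$$\|T^nf-e^{n(T-I)}f\|\le\sqrt{n}\,\|(T-I)f\|.$$
To see this, write $e^{n(T-I)}-T^n=e^{-n}\sum_k\frac{n^k}{k!}(T^k-T^n)$ and use $\|(T^k-T^n)f\|\le|k-n|\,\|(T-I)f\|$. Cauchy--Schwarz with the Poisson variance $n$ then gives the bound. Apply this with $T=G(t/n)$ and $f$ from the core $\mathcal{D}$. Since $(G(t/n)-I)f=\frac tn\bigl(G'(0)f+o(1)\bigr)$, the right-hand side is $O(t/\sqrt n)$, uniformly for $t\in[0,t_0]$. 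This uses only the existence of the limit and $\|G\|\le1$, because $\sup_{s\le s_0}\|(G(s)f-f)/s\|<\infty$.

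\textbf{Step 2 (bounded generators converge).} Set $L_n(t)=\frac nt(G(t/n)-I)$, which generates the contraction semigroup $s\mapsto e^{sL_n(t)}$ with $e^{tL_n(t)}=e^{n(G(t/n)-I)}$. I then need $e^{tL_n(t)}f\to e^{tL}f$ uniformly in $t\in[0,t_0]$. This is the Trotter--Kato theorem: $L_{h}f:=h^{-1}(G(h)-I)f\to Lf$ for $f\in\mathcal{D}$, $\mathcal{D}$ is a core for $L$ by (CT4), and all the semigroups are uniformly bounded. Hence $e^{sL_h}f\to e^{sL}f$ as $h\to0$, uniformly for $s$ in compacts. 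Here $h=t/n\le t_0/n\to0$ uniformly, so the convergence is uniform in $t$. The degenerate case $t$ near $0$ is harmless because both sides are close to $f$.

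\textbf{Step 3 (density).} Combining Steps 1 and 2 gives (\ref{CTform}) for $f\in\mathcal{D}$. Since $\|G(t/n)^n\|\le1$ and $\|e^{tL}\|\le M$ are uniformly bounded, an $\varepsilon/3$ argument extends the convergence to all $f\in\mathcal F$.

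The main obstacle is Step 2, specifically the uniformity in $t$ in the Trotter--Kato step. I would handle it through the resolvent form: $(\lambda-L_h)^{-1}g\to(\lambda-L)^{-1}g$ on the dense range $(\lambda-L)\mathcal{D}$, which is dense because $\mathcal D$ is a core. I would then use the standard estimate
$$\bigl(e^{sL_h}-e^{sL}\bigr)(\lambda-L)^{-1}=\int_0^s e^{(s-r)L_h}\bigl[(\lambda-L_h)^{-1}-(\lambda-L)^{-1}\bigr]e^{rL}\,dr$$
type identity (after multiplying by resolvents). Its integrand is controlled uniformly on compact $s$-intervals by equicontinuity of $r\mapsto e^{rL}g$.
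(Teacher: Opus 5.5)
Your proof is correct and is the classical argument of the sources the paper cites, since the paper itself gives no proof and refers to Chernoff's paper and Theorem III.5.2 in \cite{EN2000}: Chernoff's $\sqrt{n}$-lemma for $T=G(t/n)$ on the core, the Trotter--Kato theorem for the bounded generators $h^{-1}(G(h)-I)$ with $h=t/n\le t_0/n$ (which is what gives uniformity in $t$), and an $\varepsilon/3$ density argument. The one slip is in your sketch of Trotter--Kato: the displayed identity needs an extra factor $(\lambda-L_h)^{-1}$ on the left, and as written it already fails for scalars; since you can simply cite Trotter--Kato, this does not affect the proof.
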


If $G$ is Chernoff-tangent to $L$, then the expression $G(t/n)^nf$ is called a Chernoff approximation expression for $e^{tL}f$, and $G(t/n)^nu_0$ is called a Chernoff formal solution for Cauchy problem $[U'(t)=LU(t); U(0)=u_0]$. If, moreveover, (\ref{CTform}) holds, then $G$ is called a Chernoff function for operator $L$, and $G$ is called Chernoff-equivalent to $C_0$-semigroup $(e^{tL})_{t\geq 0}$; in this case $U(t)=\lim_{n\to\infty}G(t/n)^nu_0=e^{tL}u_0$ is the solution of this Cauchy problem due to Theorem \ref{semig}.

The Chernoff theorem (and definitions derived from it) admit two equivalent wordings: with unbounded time and with arbitrary small time. The first is provided above. The second arises when Chernoff function in (CT) is defined not for all $t\geq 0$, but only for $t\in[0,\delta)$ for fixed small $\delta>0$. The condition $(N)$ in the second case is substituted by the following condition (N'): 

\textit{\textbf{(N')} There exists $\omega_1>0$ such that $\|G(t)\|\leq 1+\omega_1 t$ for all $t\in[0,\delta)$.}

This wording is motivated by the fact that the value of $t/n$ in Chernoff approximation expression $G(t/n)^nf$  becomes arbitrary small as $n\to\infty$ while $t\in[0,t_0]$. This is also in line with the condition (CT3) which itself uses $G(t)$ defined only for small values of $t>0$.

One may ask why finding Chernoff function for operator $L$ is simpler than finding $e^{tL}$ using some other method? Why one should use Chernoff's theorem? The first reason is that there are no standard methods of finding $e^{tL}$ for most interesting operators $L$ with variable coefficients, so usually we can only refer to solving the Cauchy problem (\ref{aCpforPDE}) for each $u_0\in\mathcal{F}$ or to finding the resolvent of $L$ which is not an easy task when $L$ has variable coefficients. The second reason is that Chernoff function $G$ may not have semigroup composition property, i.e. $G(t_1+t_2)$ is allowed to be not  equal to $G(t_1)G(t_2)$), which gives us some freedom in writing the formula for $G$. With this freedom, sometimes it is possible to define $G(t)$ by a short formula containing variable coefficients of operator $L$ as parameters. It is exactly what we do in the paper.

The definition of Chernoff equivalence goes back to 2002's definiton by O.G.~Smo\-lya\-nov \cite{STT}, who since 2000's papers \cite{SWWdan, SWWcan, STmzm} until his death in 2021 systematically applied Chernoff's theorem to solving Cauchy problem for linear evolution equations, see overviews \cite{SmHist, Butko2020}. We will use the following result from \cite{{GR1}}:


\begin{theorem} \label{teorApprDU2}
Suppose that the following three conditions are met:
\begin{enumerate}
\item 
Numbers $m,q\in\{1,2,3,\dots\}$ are  fixed, and $\hat{q}=2\lfloor(q+1)/2\rfloor$.
Functions $a,b,c$ from the class $HC_b^{2m+\hat{q}-2}(\mathbb{R})$ are given, 
such that $\inf_{x\in{\mathbb R}}a(x)>0$. 
Operator $A$ on $UC_b(\mathbb{R})$ with domain $D(A)=HC_b^2(\mathbb{R})$ is defined by the formula
$Au = au''+bu'+cu$.

\item 
Numbers $T>0$, $M\geq 1$ and $\sigma\geq0$ are given. For any $t\in(0,T]$ bounded linear operator $S(t)$ on $UC_b(\mathbb{R})$ is defined
such that $\|S(t)^k\| \le Me^{k\sigma t}$ for any $k=1,2,3,\dots$.

\item 
There exist constant $\alpha\le1$ and nonnegative constants $B_0,B_1,\dots,B_{2m+q}$ such that for all $t\in(0,T]$ and all $f\in UC_b^{2m+q}(\mathbb{R})$ we have 
\begin{equation} \label{eqApprDU2_1}
\bigg\|S(t)f - \sum_{k=0}^{m}\frac{t^kA^kf}{k!}\bigg\| \le t^{m+\alpha}\sum_{i=0}^{2m+q}B_i\|f^{(i)}\|.
\end{equation}
\end{enumerate}

Then the following three statements hold:
\begin{enumerate}
\item 
The closure $\overline{A}$ of operator $A$ in Banach space $UC_b(\mathbb{R})$ is a generator of $C_0$-semigroup  $(e^{t\overline{A}})_{t\ge 0}$ in $UC_b(\mathbb{R})$, and the condition $\|e^{t\overline{A}}\|\le e^{\gamma t}$ for all $t\geq0$ is satisfied, where $\gamma=\sup_{x\in\mathbb{R}}c(x)$.

\item 
Let $w=max(\sigma,\gamma,0)$. Then nonnegative constants $C_0,C_1,\ldots,C_{2m+\hat{q}}$ exists (which are independent of $t$, $T$ and $n$) such that for all $t>0$, all  integer $n\geq n_{\alpha,t}$ (where $n_{\alpha,t}=t/T$ if $\alpha=1$ and $n_{\alpha,t}=max(t/T,t)$ if $\alpha<1$) and all $f\in UC_b^{2m+\hat{q}}(\mathbb{R})$ we have
\begin{equation} \label{eqApprDU2_2}
\big\|S(t/n)^n f - e^{t\overline{A}}f\big\|\leq \frac{Mt^{m+\alpha}e^{wt}}{n^{m-1+\alpha}}\sum_{i=0}^{2m+\hat{q}}C_i\|f^{(i)}\|.
\end{equation}

\item 
If $\alpha>1-m$ then for all $\mathcal{T}>0$ and all $g\in UC_b(\mathbb{R})$ the following equality is true:
\begin{equation} \label{eqApprDU2_3}
\lim_{\mathcal{T}/T\leq n\to\infty}\sup_{t\in(0,\mathcal{T}]}\big\|S(t/n)^ng-e^{t\overline{A}}g\big\| = 0.
\end{equation}
\end{enumerate}
\end{theorem}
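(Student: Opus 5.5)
The plan is to prove the three statements in order. Statement 1 is a generation result with a priori derivative bounds. Statement 2 is a telescoping (Lady Windermere's fan) estimate built on those bounds. Statement 3 follows from statement 2 by a density argument.

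\textbf{Step 1: generation and derivative bounds.} Since $\inf a>0$ and $a,b,c$ are bounded with bounded derivatives, $A$ is uniformly elliptic. I would first show that $A-\gamma$ is dissipative on $UC_b(\mathbb{R})$ using the maximum principle: at a near-maximum point $x$ of $|u|$ we have $u''u\le 0$ and $u'\approx 0$ up to small errors. I would then show that $\lambda-A$ has dense range for large $\lambda$, via classical solvability of the elliptic equation $\lambda u-au''-bu'-cu=g$ for smooth $g$ (Schauder theory or an ODE argument on $\mathbb{R}$). Lumer--Phillips then gives that $\overline{A}$ generates a semigroup with $\|e^{t\overline{A}}\|\le e^{\gamma t}$.

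Next I need the key regularity estimate: for $f\in UC_b^{k}(\mathbb{R})$ with $k\le 2m+\hat q$,
\[
\|(e^{t\overline{A}}f)^{(j)}\|\le K e^{w' t}\sum_{i=0}^{j}\|f^{(i)}\|,\qquad j\le k.
\]
To get it I would differentiate the equation $u_t=au''+bu'+cu$ $j$ times. The function $v=u^{(j)}$ then satisfies $v_t=av''+(b+ja')v'+\tilde c_j v+R_j$, where $R_j$ involves only lower derivatives of $u$. Duhamel's formula, the dissipativity bound, and induction on $j$ then give the estimate. This is exactly where the coefficients must lie in $HC_b^{2m+\hat q-2}$, since the $j$-th differentiation consumes $j$ derivatives of $a,b,c$. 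Choosing $\hat q$ even makes $2m+\hat q$ an even order, which matches powers of the second-order operator $A$.

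I expect Step 1 to be the main technical obstacle: the rigorous justification of differentiating the equation, for which I would first work with smooth data and then approximate, and the bookkeeping of the derivative orders.

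\textbf{Step 2: the telescoping estimate.} Put $\tau=t/n$; the condition $n\ge n_{\alpha,t}$ guarantees $\tau\le T$, and additionally $\tau\le1$ when $\alpha<1$. I would write
\[
S(\tau)^nf-e^{n\tau\overline{A}}f=\sum_{k=0}^{n-1}S(\tau)^{n-1-k}\bigl(S(\tau)-e^{\tau\overline{A}}\bigr)e^{k\tau\overline{A}}f .
\]
Set $g=e^{k\tau\overline{A}}f$ and split
\[
S(\tau)g-e^{\tau\overline{A}}g=\Bigl(S(\tau)g-\sum_{j\le m}\tfrac{\tau^jA^jg}{j!}\Bigr)-\Bigl(e^{\tau\overline{A}}g-\sum_{j\le m}\tfrac{\tau^jA^jg}{j!}\Bigr).
\]
The first bracket is bounded by hypothesis \eqref{eqApprDU2_1}.

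For the second bracket, Taylor's formula with integral remainder gives a bound $\frac{\tau^{m+1}}{(m+1)!}\sup_{s\le\tau}\|e^{s\overline{A}}A^{m+1}g\|$. This quantity is controlled by derivatives of $g$ up to order $2m+2\le 2m+\hat q$. Then $\tau^{m+1}\le C\tau^{m+\alpha}$, using $\tau\le T$ when $\alpha=1$ and $\tau\le 1$ when $\alpha<1$.

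Step 1 bounds the derivatives of $g=e^{k\tau\overline{A}}f$ by those of $f$, with a factor $e^{wt}$, and $\|S(\tau)^{n-1-k}\|\le Me^{\sigma t}$ by hypothesis. Summing the $n$ equal-order terms gives
\[
nM e^{wt}\tau^{m+\alpha}\sum_i C_i\|f^{(i)}\|=\frac{Mt^{m+\alpha}e^{wt}}{n^{m-1+\alpha}}\sum_i C_i\|f^{(i)}\|,
\]
which is \eqref{eqApprDU2_2}.

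\textbf{Step 3: density.} If $\alpha>1-m$, then $m-1+\alpha>0$, so the right-hand side of \eqref{eqApprDU2_2} tends to $0$ uniformly in $t\in(0,\mathcal{T}]$ for every $f\in UC_b^{2m+\hat q}(\mathbb{R})$. This space is dense, since it contains $C_b^\infty(\mathbb{R})$. The family $S(t/n)^n$ satisfies $\|S(t/n)^n\|\le Me^{\sigma\mathcal T}$, and $\|e^{t\overline A}\|\le e^{\gamma\mathcal T}$. For arbitrary $g$, an $\varepsilon/3$ argument (approximate $g$ by such an $f$) then yields \eqref{eqApprDU2_3}.
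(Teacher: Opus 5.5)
The paper does not prove this statement; it quotes it from Galkin--Remizov \cite{GR1}. Your overall architecture is the natural one: the telescoping sum, then an $\varepsilon/3$ density argument. Steps 2--3 are fine as bookkeeping.

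The gap is in Step 1, and it is not merely technical: under the stated hypothesis on $a,b,c$, the derivative bounds cannot be obtained by differentiating the equation. Differentiating $u_t=au''+bu'+cu$ $j$ times produces the terms $a^{(j)}u''$, $b^{(j)}u'$, $c^{(j)}u$ in $R_j$. So your route needs $j$ bounded derivatives of the coefficients to control $u^{(j)}$. To apply \eqref{eqApprDU2_1} to $g=e^{k\tau\overline{A}}f$ you need $j$ up to $2m+q$. You actually set it up to $2m+\hat q$, and also need $2m+2$ for $A^{m+1}g$. The hypothesis only provides $2m+\hat q-2\le 2m+q-1$ derivatives.

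Your sentence ``this is exactly where the coefficients must lie in $HC_b^{2m+\hat q-2}$'' contradicts your own count. For the paper's application ($m=2$, $q=3$, coefficients with six derivatives), your route would require seven or eight. Note also that nothing in your method forces rounding $q$ up to an even $\hat q$; that rounding is a symptom of the mechanism you are missing.

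The missing idea is to get regularity from powers of the generator rather than from the differentiated equation. Put $2k=2m+\hat q$. In one dimension, bootstrap $u''=(\overline{A}u-bu'-cu)/a$ and use the interpolation inequality $\|u'\|\le\varepsilon\|u''\|+\varepsilon^{-1}\|u\|$. This shows $D(\overline{A}^{k})=UC_b^{2k}(\mathbb{R})$ with $\sum_{i\le 2k}\|u^{(i)}\|\le C(\|u\|+\|\overline{A}^{k}u\|)$, and it uses exactly $2k-2$ derivatives of $a,b,c$ together with $\inf a>0$.

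Since $\overline{A}^{k}e^{s\overline{A}}f=e^{s\overline{A}}\overline{A}^{k}f$, you obtain $\sum_{i\le 2k}\|(e^{s\overline{A}}f)^{(i)}\|\le Ce^{\max(\gamma,0)s}\sum_{i\le 2k}\|f^{(i)}\|$. This covers all orders up to $2m+\hat q\ge 2m+q$. Only even orders are reachable this way, which is why $q$ is replaced by $\hat q$.

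Similarly, in your second bracket write $A^{m+1}e^{k\tau\overline{A}}f=e^{k\tau\overline{A}}A^{m+1}f$ instead of estimating $2m+2$ derivatives of $g$. With Step 1 replaced in this way, the rest of your proof goes through.
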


\subsection{Derivation of a formula for Chernoff approximations based on S(t)}




Theorem \ref{Fform} constructs a Chernoff function that does not generate rapidly converging approximations. However, it allows the solution to the Cauchy problem (\ref{firsteq}) for a parabolic differential equation to be represented in the form of a new type of Feynman formula. Specifically, in the formula defining the solution, the integral is not taken over the set traversed by the variable $x$, i.e., not along the line $\mathbb{R}$. Instead, the integral is taken over the interval $[-1,1]$, and due to the boundedness of the integrand and the interval of integration, it is a proper Riemann integral. Subsequently, the Chernoff function from Theorem \ref{Fform} is used in Theorem \ref{firstform} to create another Chernoff function (which we again denote by symbol $S$) that generates rapidly converging Chernoff approximations.
\begin{theorem}\label{Fform}
 For all numbers $x\in\mathbb{R}$ and $t \geq 0$, for all $a,b,c\colon\mathbb{R}\to\mathbb{R}$ such that $a, b, c, u_0 \in UC_b(\mathbb {R})\textrm{ and } \inf_{x \in \mathbb{R}} a(x)>0$ we introduce the notation:
\begin{equation}\label{FformEq}
(S(t)f)(x) = \int\limits_{-1}^{1}\frac{1}{2}\Big(1+c(x)t\Big)f\Big(x+(6a(x)t)^{\frac{1}{2}}y+b(x)t\Big)dy.
\end{equation}
For each $\varphi\in C_b^{\infty}(\mathbb {R})$ and each $x\in\mathbb{R}$, we denote
\begin{equation}\label{generator0}
(H\varphi)(x)=a(x)\varphi(x)''+b(x)\varphi(x)'+c(x)\varphi(x).
\end{equation}

Let the space $\mathcal{F}=UC_b(\mathbb {R})$ be equipped with the uniform norm $ \|f\| = sup_{x \in \mathbb {R}}|f(x)|$. Suppose that $C_b^\infty(\mathbb {R})\subset D(H) \subset UC_b(\mathbb {R})$, and the operator $H$ is defined by (\ref{generator0}) for all $\varphi \in C_b^\infty(\mathbb {R})$. Then:

\begin{enumerate}
\item The operator $H$ defined on $HC_b^2(\mathbb {R})=D(H)$ is closable, and the closure $\overline{H}$ of $H$ in the Banach space $UC_b(\mathbb {R})$ is the generator of the $C_0$-semigroup $(e^{tH})_{t\geq 0}$ in $UC_b(\mathbb {R})$, and the condition $\|e^{tH}\|\leq e^{\gamma t}$ holds for all $t\geq 0$, where $\gamma=\sup_{x\in\mathbb {R}}c(x)$.
\item $S(t)$ is a bounded linear operator in $UC_b(\mathbb {R})$ for every $t\geq 0$.
\item $S(t)= I+tH+o(t)$ as $t \to +0$ on $C_b^\infty(\mathbb{R})$, that is, 
$$
\lim_{t\to0}\frac{1}{t}\left\|S(t)f-f-tHf\right\|=0 \textrm{ for each }f\in C_b^\infty(\mathbb{R}).$$
\item $S(t)$ and $(H,D(H))$ satisfy all the conditions of Chernoff's theorem (theorem \ref{Chernth}). In other words, $S$ is the Chernoff function for $H$.
\item For all $a,b,c \in HC_b^2(\mathbb{R})$, $f \in UC_b^4(\mathbb{R})$, $t>0$, $n\geq t$, the inequality
$$
\Bigg\|S(t/n)^nf-e^{tH}f\Bigg\|
\leq e^{wt}\frac{t^{2}}{n}\sum_{j=0}^{4}C_j\|f^{(j)}\|,
$$

where $w$ and $C_0,C_1,\ldots,C_{4}$ are non-negative constants depending on $a,b,c$ and independent of $f$, $t$, and $n$.
\item For all $\mathcal{T}>0$ and all $g\in UC_b(\mathbb{R})$, the following equality holds:
\begin{equation*}
\lim_{\mathcal{T}\leq n\to\infty}\sup_{t\in(0,\mathcal{T}]}\big\|S(t/n)^ng-e^{tH}g\big\| = 0.
\end{equation*}
\end{enumerate}
\end{theorem}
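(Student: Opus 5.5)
The plan is to reduce everything to Theorem \ref{teorApprDU2} with $m=1$, $q=2$ (so $\hat q=2$) and $\alpha=1$, together with a direct Taylor expansion of (\ref{FformEq}).

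\emph{Items 1 and 2.} Item 1 is exactly statement 1 of Theorem \ref{teorApprDU2} once its hypotheses are verified below. For item 2, linearity of $S(t)$ is clear. Boundedness follows from
\[
|(S(t)f)(x)|\le \tfrac12\,|1+c(x)t|\cdot 2\|f\|,
\qquad\text{so}\qquad
\|S(t)\|\le 1+\|c\|t\le e^{\|c\|t}.
\]
Uniform continuity of $S(t)f$ in $x$ follows because the integrand
\[
x\mapsto (1+c(x)t)\,f\bigl(x+(6a(x)t)^{1/2}y+b(x)t\bigr)
\]
is uniformly continuous, uniformly in $y\in[-1,1]$. Here we use that $\sqrt{a}$ is uniformly continuous, since $a\ge a_0>0$ and $a\in UC_b$. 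The bound $\|S(t)\|\le e^{\|c\|t}$ also gives $\|S(t)^k\|\le e^{k\sigma t}$ with $M=1$ and $\sigma=\|c\|$. This is condition 2 of Theorem \ref{teorApprDU2} and condition (N) of Theorem \ref{Chernth}.

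\emph{Item 3 and the key estimate.} Fix $x$ and set $h=h(y)=(6a(x)t)^{1/2}y+b(x)t$. Expand $f(x+h)$ by Taylor's formula to third order with a fourth-order Lagrange remainder, then integrate $\frac12\int_{-1}^1 dy$. The moments are
\[
\tfrac12\int_{-1}^1 y\,dy=0,\qquad
\tfrac12\int_{-1}^1 y^2\,dy=\tfrac13,\qquad
\tfrac12\int_{-1}^1 y^3\,dy=0.
\]
Hence $\frac12\int h\,dy=bt$ and $\frac12\int h^2\,dy=2at+b^2t^2$, so $\frac12\int \frac{h^2}{2}f''\,dy=atf''+O(t^2)\|f''\|$. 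The cubic term contributes $O(t^2)$ (the $y^3$ part vanishes, the rest carries a factor $t^{2}$), and the quartic remainder is $O(t^2)\|f^{(4)}\|$ since $|h|\le C t^{1/2}$ for $t\le T$. Multiplying by $1+c(x)t$ adds $ctf$ plus $O(t^2)$ cross terms. Collecting,
\[
\|S(t)f-f-tHf\|\le t^{2}\sum_{i=0}^4 B_i\|f^{(i)}\|,\qquad t\in(0,T],
\]
with $B_i$ depending only on $\|a\|,\|b\|,\|c\|$ and $T$. This is (\ref{eqApprDU2_1}) with $m=1$, $\alpha=1$, $2m+q=4$. It gives item 3 immediately on $C_b^\infty(\mathbb{R})$.

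\emph{Items 4–6.} For item 4, (CT1) follows from uniform continuity of $f$ and the fact that the shift $h\to0$ uniformly in $x,y$. (CT2) is immediate. (CT3) holds on $\mathcal{D}=C_b^\infty(\mathbb{R})$, which is dense, with $S'(0)=H$ there by item 3. (CT4) holds because $C_b^\infty$ is a core for $\overline H$, which in turn rests on item 1. (E) is item 1 and (N) was checked above. Items 5 and 6 are then statements 2 and 3 of Theorem \ref{teorApprDU2} with $m=1$, $\alpha=1$, $\hat q=2$. This yields the rate $Mt^{2}e^{wt}n^{-1}\sum_{i=0}^4C_i\|f^{(i)}\|$ for $n\ge t/T$; taking $T=1$ gives the condition $n\ge t$. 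Since $\alpha=1>1-m=0$, statement 3 gives item 6.

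\emph{Main obstacle.} The main delicate point is the coefficient regularity. Theorem \ref{teorApprDU2} requires $a,b,c\in HC_b^{2m+\hat q-2}=HC_b^{2}$, while items 1–4 assume only $UC_b$. For items 1 and 4 under minimal assumptions I would use Assumption 1 for generation and closability rather than Theorem \ref{teorApprDU2}, and reserve the $HC_b^2$ hypothesis for items 5 and 6, as the statement does. The second point needing care is keeping the Taylor remainder constants uniform in $x$; this is straightforward because all coefficients and derivatives of $f$ are bounded.
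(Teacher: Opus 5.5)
Your proposal is correct and follows the paper's proof essentially step for step. It uses the same reduction to Theorem \ref{teorApprDU2} with $m=1$, $q=2$, $\alpha=1$, $T=1$, the same Taylor expansion with fourth-order Lagrange remainder giving $\|S(t)f-f-tHf\|\le t^2\sum_{i=0}^4 B_i\|f^{(i)}\|$, and the same bound $\|S(t)\|\le 1+\|c\|t$ for both (N) and condition 2 of Theorem \ref{teorApprDU2}. Your flag on coefficient regularity is apt, since the paper's proof of item 1 also silently needs $a,b,c\in HC_b^2(\mathbb{R})$. Note, though, that falling back on Assumption 1 gives generation but not the bound $\|e^{tH}\|\le e^{\gamma t}$; you can recover it from $|(S(t)f)(x)|\le (1+c(x)t)\|f\|\le (1+\gamma t)\|f\|$ for small $t$ (where $1+c(x)t\ge 0$) together with Chernoff's theorem.
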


\begin{proof}
1. Let us check the conditions of theorem \ref{teorApprDU2}. For $m=1$, $q=2$ we get $\hat{q}=2\lfloor(q+1)/2\rfloor=2\lfloor(2+1)/2\rfloor=2$, $2m+\hat{q}-2=4$, $2m+\hat{q}=6$. Therefore, we need $a,b,c\in HC_b^2(\mathbb{R})$.

By hypothesis, we have $\inf_{x\in\mathbb{R}}a(x)=a_0>0$.

Functions $a,b,c\in HC_b^{2}(\mathbb{R})$ are given,
such that $\inf_{x\in{\mathbb R}}a(x)>0$.
The operator $H$ on $UC_b(\mathbb{R})$ with domain $D(H)=HC_b^2(\mathbb{R})$ is defined by the formula
$Hu = au''+bu'+cu$ by the conditions of theorem \ref{Fform}.

2. It is known that $C_b(\mathbb{R})$ is a Banach space with respect to the uniform norm. The uniform limit of uniformly continuous bounded functions is a uniformly continuous bounded function, so $UC_b(\mathbb{R})$ is a closed subspace of the complete space $C_b(\mathbb{R})$, hence $UC_b(\mathbb{R})$ itself is a complete normed space, i.e., a Banach space. In (\ref{FformEq}), only bounded continuous functions exist under the integral sign, so the Riemann integral exists, and it is obvious that
$S(t)$ for each $t \geq 0$ is a bounded linear operator on $C_b(\mathbb{R})$ with respect to the uniform norm $\|f\|=\sup_{x\in\mathbb{R}}|f(x)|$. Moreover, the functions under the integral sign are uniformly continuous, so after integrating with respect to $y$, we again obtain uniformly continuous functions due to the properties of elementary Riemann integration. Thus, the space $UC_b(\mathbb{R})$ is invariant with respect to $S(t)$, and therefore $S(t)$ can be viewed as an operator on $UC_b(\mathbb{R})$. Point 2 is proved.

3. We use a Taylor expansion and represent the remainder terms in Lagrangian form. We then group the terms by powers of $t$. Also note that the length of the interval $[-1,1]$ is $2$.
\begin{multline*}
\int\limits_{-1}^{1}\frac{1}{2}(1+c(x)t)f\left(x+(6a(x)t)^{\frac{1}{2}}y+b(x)t\right)dy=(1+c(x)t)f(x)+(1+c(x)t)b(x)f'(x)t+
\\
+(1+c(x)t)\left(a(x)t+\frac{1}{2}(b(x))^2t^2\right)f''(x)+(1+c(x)t)\left((a(x)b(x)t^2+\frac{1}{6}(b(x))^3t^3\right)f'''(x)+
\\
\frac{1}{48}(1+c(x)t)\int\limits_{-1}^{1}\Big(6a(x)t)^{\frac{1}{2}}y+b(x)t\Big)^4f^{(4)}(\xi(t,x,y))dy=
\end{multline*}
\begin{multline*}
=f(x)+t\Big(a(x)f''(x)+b(x)f'(x)+c(x)f(x)\Big)+
\\
+\frac{t^2}{2}\Big(2b(x)c(x)f'(x)+\Big(b(x)^2+2a(x)c(x)\Big)f''(x)+2a(x)b(x)f'''(x)\Big)+
\\
+t^3\Big(\frac{1}{2}b(x)^2c(x)f''(x)+\Big(\frac{1}{6}b(x)^3+a(x)b(x)c(x)\Big)\Big)f'''(x)
+\frac{t^4}{6}b(x)^3c(x)f'''(x)+
\\
+\frac{1}{48}(1+c(x)t)\int\limits_{-1}^{1}\Big(6a(x)t)^{\frac{1}{2}}y+b(x)t\Big)^4f^{(4)}(\xi)dy=f(x)+t(Hf)(x)+o(t)
\end{multline*}

4. Let's check all the conditions of Chernoff's theorem (theorem \ref{Chernth}): (E), (CT), and (N).
Condition (E) is proved in point 1 of Theorem \ref{Fform}. Now let's check (CT0)-(CT4).
(CT0) follows from point 2 of Theorem \ref{Fform}.

Let us prove (CT1), i.e. show that $\lim_{t \to 0}{\|S(t)f-f\|}=0$ for each $f \in UC_b(\mathbb {R})$. Indeed:
\begin{multline*}
\lim_{t \to 0}{\|S(t)f-f\|}
=\lim_{t \to 0}\sup_{x\in\mathbb{R}}\bigg|\int\limits_{-1}^{1}\frac{1}{2}(1+c(x)t)f(x+(6a(x)t)^{\frac{1}{2}}y+b(x)t)dy\bigg|\leq\\
\frac{1}{2}\lim_{t \to 0}\int\limits_{-1}^1\sup_{x\in\mathbb{R}}|f(x+(6a(x)t)^{\frac{1}{2}}y+b(x)t)-2f(x)|dy\ +\\
+\frac{1}{2}\lim_{t \to 0}\left(t\cdot \int\limits_{-1}^1\sup_{x\in\mathbb{R}}|c(x)|\sup_{x\in\mathbb{R}}|f(x+(6a(x)t)^{\frac{1}{2}}y+b(x)t)|dy\right)
\end{multline*}
Let us explain why both limits above are equal to zero. In the first limit, the integrand tends to zero uniformly in $x\in\mathbb{R}$ and $y\in[0,1]$, since the functions $f$ and $s\longmapsto s^{\frac{1}{2}}$ are uniformly continuous, and the functions $a,b$ are bounded. In the second limit, the integrand does not exceed $\|c\|\cdot\|f\|$.

(CT2) Follows from the definition of $S(t)$. Setting $t=0$ in (\ref{FformEq}) yields $(S(0)f)(x)=f(x)$.

To prove (CT3), we choose $\mathcal{D}=C_b^\infty(\mathbb{R})$ 
In \cite{R-JMP2019}, it was proved that $C_b^\infty(\mathbb{R})$ is dense in $UC_b(\mathbb{R})$. We only need to prove that $\lim_{t \to +0}\frac{1}{t}\|S(t)f - f - tHf\|=0$ for all $f\in C_b^\infty(\mathbb{R})$. But this condition is even weaker than the one we have proved in step 2. Therefore, (CT3) is proved.

Condition (CT4) follows from item 1 of Theorem \ref{Fform}. Thus, $S$ is Chernoff tangent to $H$, since all conditions (CT0)-(CT4) are satisfied.

Now we prove condition (N). For an arbitrary $f\in UC_b(\mathbb{R})$ and an arbitrary $t \geq 0$, the following estimates hold:

\begin{multline*}
\|S(t)f\|=\sup_{x\in\mathbb{R}}\bigg|\int\limits_{-1}^{1}\frac{1}{2}(1+c(x)t)f(x+(6a(x)t)^{\frac{1}{2}}y+b(x)t)dy\bigg|\leq \\
\leq \sup_{x\in\mathbb{R}}\left|\int\limits_{-1}^{1}\frac{1}{2}(1+c(x)t)f(x+(6a(x)t)^{\frac{1}{2}}y+b(x)t)dy\right|\leq\\
\leq \frac{1}{2}\int\limits_{-1}^{1}\sup_{x\in\mathbb{R}}|f(x+(6a(x)t)^{\frac{1}{2}}y+b(x)t)|dy+\frac{1}{2}t\int\limits_{-1}^{1}\sup_{x\in\mathbb{R}}|c(x)|\sup_{x\in\mathbb{R}}|f(x+(6a(x)t)^{\frac{1}{2}}y+b(x)t)|dy \leq\\
\leq\frac{1}{2}\|f\|\int\limits_{-1}^{1}dy+\frac{1}{2}t\|c\|\|f\|\int\limits_{-1}^{1}dy=\|f\|\left(1+t\|c\|\right)\leq (1+\eta t)\|f\|.
\end{multline*}

Thus, for the corresponding constant $\eta>0$, we have $\|S(t)\|\leq 1+\eta t$ for all $t\geq 0$, therefore (N) is true. Item 4 is now proven.

5. Apply theorem \ref{teorApprDU2} to $m=1$, $q=2$, $\alpha=1$. According to point 3 of theorem \ref{Fform}, the following decomposition is valid:
\begin{multline*}
\int\limits_{-1}^{1}\frac{1}{2}(1+c(x)t)f\left(x+(6a(x)t)^{\frac{1}{2}}y+b(x)t\right)dy
=f(x)+t(Hf)(x)+
\\
+\frac{t^2}{2}\Big(2b(x)c(x)f'(x)+\Big(b(x)^2+2a(x)c(x)\Big)f''(x)+2a(x)b(x)f'''(x)\Big)+
\\
+t^3\Big(\frac{1}{2}b(x)^2c(x)f''(x)+\Big(\frac{1}{6}b(x)^3+a(x)b(x)c(x)\Big)\Big)f'''(x)
+\frac{t^4}{6}b(x)^3c(x)f'''(x)+
\\
+\frac{1}{48}(1+c(x)t)\int\limits_{-1}^{1}\Big(6a(x)t)^{\frac{1}{2}}y+b(x)t\Big)^4f^{(4)}(\xi)dy=
\end{multline*}

Hence,

\begin{multline*}
\|(S(t)-I-tH)f\|=\sup_{x \in \mathbb{R}}\Bigg|\frac{t^2}{2}\Big(2b(x)c(x)f'(x)+\Big(b(x)^2+2a(x)c(x)\Big)f''(x)+2a(x)b(x)f'''(x)\Big)+\\
+t^3\Big(\frac{1}{2}b(x)^2c(x)f''(x)+\Big(\frac{1}{6}b(x)^3+a(x)b(x)c(x)\Big)\Big)f'''(x)
+t^4\frac{1}{6}b(x)^3c(x)f'''(x)+
\\
+\frac{1}{48}(1+c(x)t)\int\limits_{-1}^{1}\Big(6a(x)t)^{\frac{1}{2}}y+b(x)t\Big)^4f^{(4)}(\xi)dy\Bigg|\leq\\
\leq t^2\Big(\|b\|\|c\|\|f'\|+\frac{1}{2}\|b\|^2\|c\|\|f''\|+\|a\|\|b\|\|f'''\|\Big)+t^3\Big(\frac{1}{2}\|b\|^2\|c\|\|f''\|+\frac{1}{6}\|b\|^3\|f'''\|+\|a\|\|b\|\|c\|\|\|f'''\|\Big)+\\
+\sup_{x \in \mathbb{R}}\Bigg|\frac{1}{48}(1+c(x)t)\int\limits_{-1}^{1}\Big(6a(x)t)^{\frac{1}{2}}y+b(x)t\Big)^4f^{(4)}(\xi)dy\Bigg|
\end{multline*}

Consider
$$
\sup_{x \in \mathbb{R}}\Bigg|\frac{1}{48}(1+c(x)t)\int\limits_{-1}^{1}\Big(6a(x)t)^{\frac{1}{2}}y+b(x)t\Big)^4f^{(4)}(\xi)dy\Bigg|\leq \frac{(1+\|c\|t)}{48}\int\limits_{-1}^{1}\sup_{x \in \mathbb{R}}\Bigg|\Big(6a(x)t)^{\frac{1}{2}}y+b(x)t\Big)^4f^{(4)}(\xi)\Bigg|dy \leq
$$
$$
\leq \frac{(1+\|c\|t)}{24}\Big((6\|a\|t)^{1/2}+\|b\|t\Big)^4\|f^{(4)}\|\leq (P_1t^2+P_2t^{5/2}+P_3t^3+P_4t^{7/2}+P_5t^4+P_6t^{9/2}+P_7t^5)\|f^{(4)}\|
$$
\begin{multline*}
t^2\Big(\|b\|\|c\|\|f'\|+\frac{1}{2}\|b\|^2\|c\|\|f''\|+\|a\|\|b\|\|f'''\|\Big)+t^3\Big(\frac{1}{2}\|b\|^2\|c\|\|f''\|+\frac{1}{6}\|b\|^3\|f'''\|+\|a\|\|b\|\|c\|\|\|f'''\|\Big)\leq\\
\leq Q_1\|f'\|t^2+Q_{2,1}\|f''\|t^2+Q_{2,2}\|f''\|t^3+Q_{3,1}\|f'''\|t^2+Q_{3,1}\|f''\|t^3
\end{multline*}

Let $K_1=Q_1$
$$
K_2=\max(Q_{2,1},Q_{2,2})
$$
$$
K_3=\max(Q_{3,1},Q_{3,2})
$$
$$
K_4=\max(P_1,P_2,P_3,P_4,P_5,P_6,P_7)
$$

We get the following.

$$
\|(S(t)-I-tH)f\| \leq (t^2+t^{5/2}+t^3+t^{7/2}+t^4+t^{9/2}+t^5)\sum_{i=1}^4 K_i\|f^{(i)}\|
$$

$$
Q_j,P_j, K_i \geq 0
$$

For $t \in (0,T]$ and $\alpha \in (0,1]$, the estimate $t^{2+\alpha}=t^2t^{\alpha}\leq t^2T^{\alpha}$ is true. Therefore
$$
\|(S(t)-I-tH)f\| \leq t^2(T^{1/2}+T+T^{3/2}+T^2+T^{5/2}+T^3)\sum_{i=1}^4 K_i\|f^{(i)}\|.
$$
Set $T=1$, then 
$$
B_i=(T^{1/2}+T+T^{3/2}+T^2+T^{5/2}+T^3)K_i=6K_i.
$$
Thus, inequality
$$
\|(S(t)-I-tH)f\| \leq t^2\sum_{i=1}^4 B_i\|f^{(i)}\|
$$
6. Follows from what was proved in point 5 and from theorem \ref{teorApprDU2}.
\end{proof}

The following Theorem \ref{firstform} is the main result of the present paper. It proposes Chernoff function $S(t)$ which provides Chernoff approximations to solutions of (\ref{firsteq}). The solution is given in the form $u(t,x)=\lim_{n\to\infty}u_n(t,x)$ where Chernoff approximations $u_n(t,x)$ are defined by $u_n(t,x)=(S(t/n)^nu_0)(x)$. The operator $S(t)$ that we introduce below is a sum of two integrals, so representation 
$$
u(t,x)=\lim_{n\to\infty}(S(t/n)^nu_0)(x)
$$
is an expression which contains integrals of arbitrary high multiplicity $n$. Such expressions are called quasi-Feynman formulas in \cite{R-JFA2016}, meanwhile representation of a function as a limit of one multiple integral with multiplicity tending to infinity are known as Feynman formulas, see \cite{R-JFA2016, SmHist} for details. Moreover the integrals that appear in Theorem \ref{firstform} are over the cube $[-1,1]^n$ which is a bounded set, so they can be treated as proper Riemann integrals, which is very useful in practice for numerical computation of the integrals (compared with integrals over $\mathbb{R}^n$ that are proposed in \cite{SmHist}). As one can find from the overciew \cite{Butko2020} it is the first concrete example of such representation of the solution of an evolution equation.

Recall the notation that is given in section \ref{asssect}.

\begin{theorem}\label{firstform}
Let $a, b, c \in UC_b^2(\mathbb {R})$ and  $\inf_{x\in\mathbb{R}}a(x)>0$. For all $\varphi\in HC_b^2(\mathbb {R}))$ and all $x\in\mathbb{R}$ let us denote:
$
(H\varphi)(x)=a(x)\varphi''(x)+b(x)\varphi'(x)+c(x)\varphi(x).
$
For all $x\in\mathbb{R}$, $t \geq 0$, $f\in UC_b(\mathbb {R})$ let us denote:
\begin{multline}\label{Chernofffunc}
(S(t)f)(x) = \int\limits_{-1}^{1}\frac{1}{2}\Big(1+c(x)t\Big)f\Big(x+(6a(x)t)^{\frac{1}{2}}y+b(x)t\Big)dy
+\int\limits_{-1}^{1}\Big(\sum_{j=0}^6\beta_j(t,x)y^j\Big)f(x+yt^{\frac{1}{4}})dy,
\end{multline}
where
\begin{multline*}
\beta_0(t,x)=
\frac{1225}{1024}\Big(a(x)c''(x)+b(x)c'(x)+c(x)^2\Big)t^2-\\
-\frac{11025}{512}\Big(a(x)a''(x)+2a(x)b'(x)+b(x)a'(x)\Big)t^{3/2}+\frac{14553}{64}a(x)^2t,
\end{multline*}
$$
\beta_1(t,x)=\frac{3675}{256}\Big(a(x)b''(x)+2a(x)c'(x)+b(x)b'(x)\Big)t^{7/4}-\frac{19845}{32}a(x)a'(x)t^{5/4},
$$
\begin{multline*}
\beta_2(t,x)=
-\frac{11025}{1024}\Big(a(x)c''(x)+b(x)c'(x)+c(x)^2\Big)t^2+\\
+\frac{178605}{512}\Big(a(x)a''(x)+2a(x)b'(x)+b(x)a'(x)\Big)t^{3/2}-\frac{280665}{64}a(x)^2t,
\end{multline*}
$$
\beta_3(t,x)
=-\frac{6615}{128}\Big(a(x)b''(x)+2a(x)c'(x)+b(x)b'(x)\Big)t^{7/4}+\frac{42525}{16}a(x)a'(x)t^{5/4},
$$
\begin{multline*}
\beta_4(t,x)
=\frac{24255}{1024}\Big(a(x)c''(x)+b(x)c'(x)+c(x)^2\Big)t^2-\\
-\frac{467775}{512}\Big(a(x)a''(x)+2a(x)b'(x)+b(x)a'(x)\Big)t^{3/2}+\frac{800415}{64}a(x)^2t.
\end{multline*}
$$
\beta_5(t,x)
=\frac{10395}{256}\Big(a(x)b''(x)+2a(x)c'(x)+b(x)b'(x)\Big)t^{7/4}-\frac{72765}{32}a(x)a'(x)t^{5/4},
$$
\begin{multline*}
\beta_6(t,x)
=-\frac{15015}{1024}\Big(a(x)c''(x)+b(x)c'(x)+c(x)^2\Big)t^2+\\
+\frac{315315}{512}\Big(a(x)a''(x)+2a(x)b'(x)+b(x)a'(x)\Big)t^{3/2}-\frac{567567}{64}a(x)^2t.
\end{multline*}
Then the following statements hold:

1. The operator $H$ defined on $HC_b^2(\mathbb {R})=D(H)$ is closable, its closure is the generator of a $C_0$-semigroup $(e^{tH})_{t\geq0}$ в $UC_b(\mathbb {R})$,  and the condition  $\|e^{tH}\|\leq e^{\gamma t}$  holds for all $t\geq0$, where $\gamma=\sup_{x\in\mathbb{R}}c(x)$.

2. A solution $u(t,x)$ to the Cauchy problem (\ref{firsteq})exists for every $u_0\in UC_b(\mathbb{R})$ as mild solution, and exists as a classical solution for every $u_0\in D(\overline{H})\supset HC_b^2(\mathbb {R})$, in both cases, it is given by the equality $u(t,x)=(e^{tH}u_0)(x)$. 

3. $S(t)f= f+tHf+\frac{1}{2}(tH)^2+o(t^2)$ as $t \to +0$ for each $f\in C_b^\infty(\mathbb{R})$.

4. For all $a,b,c \in HC_b^6(\mathbb{R})$ and each $f \in UC_b^8(\mathbb{R})$, the following holds:
$$
\Bigg\|S(t/n)^nf-e^{tH}f\Bigg\|
\leq e^{w t}\frac{t^{3}}{n^{2}}\sum_{j=0}^{8}C_j\|f^{(j)}\|,
$$
for all  $t>0$
 and all integers $n\geq t$
где $w$ и $C_0,C_1,\ldots,C_{8}$ — are non-negative constants depending on $a,b,c$and independent of $f$, $t$ and $n$.

5. For all $\mathcal{T}>0$ and all $g\in UC_b(\mathbb{R})$ the following equality holds:
\begin{equation*}
\lim_{\mathcal{T}\leq n\to\infty}\sup_{t\in(0,\mathcal{T}]}\big\|S(t/n)^ng-e^{tH}g\big\| = 0.
\end{equation*}
\end{theorem}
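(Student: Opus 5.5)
The plan is to apply Theorem \ref{teorApprDU2} with $m=2$, following the same scheme as the proof of Theorem \ref{Fform}.

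\emph{Parameters and item 1.} Take $m=2$ and $q=4$. Then $\hat q=2\lfloor 5/2\rfloor=4$, so $2m+\hat q-2=6$ and $2m+\hat q=8$. This matches the hypotheses $a,b,c\in HC_b^6(\mathbb{R})$ and $f\in UC_b^8(\mathbb{R})$ in item 4. Item 1 is then statement 1 of Theorem \ref{teorApprDU2}. Item 2 follows from item 1 together with Theorem \ref{semig} and the discussion of mild and classical solutions in Section \ref{asssect}.

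\emph{Item 3.} Write $S(t)=S_1(t)+S_2(t)$, where $S_1$ is the operator of Theorem \ref{Fform} and $S_2$ is the polynomial-weight integral.

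For $S_1$, Taylor expand $f$ to order $6$ around $x$ with a Lagrange remainder of order $7$. Keep the terms up to $t^2$ exactly, using $\int_{-1}^1 y^{2k}dy/2=1/(2k+1)$. The $t^2$-coefficient of $S_1(t)f$ is the expression already computed in the proof of Theorem \ref{Fform}, plus the quartic term $\frac{(6a)^2}{4!\cdot5}f^{(4)}t^2=\frac{3}{10}a^2f^{(4)}t^2$. The remaining terms are $O(t^{5/2})$.

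Next, compute $\frac12H^2f$ explicitly:
$$
\tfrac12 H^2f=\tfrac12\big[a^2f^{(4)}+(2aa'+2ab)f'''+(aa''+2ab'+ba'+b^2+2ac)f''+(ab''+2ac'+bb'+2bc)f'+(ac''+bc'+c^2)f\big].
$$
The difference $D_f=\frac12H^2f-[t^2\text{-coeff of }S_1f]$ is a combination $\sum_{k=0}^4 d_k(x)f^{(k)}(x)$. The coefficients $d_k$ are exactly the groupings that appear in the $\beta_j$, namely $a^2$, $aa'$, $aa''+2ab'+ba'$, $ab''+2ac'+bb'$, and $ac''+bc'+c^2$.

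For $S_2$, expand $f(x+yt^{1/4})=\sum_{k\le 7}\frac{y^kt^{k/4}}{k!}f^{(k)}(x)+R$. The operator then reduces to moments $\mu_{jk}=\int_{-1}^1y^{j+k}dy$. Each coefficient $\beta_j$ carries a power $t^{2-k/4}$ that is designed to pair with the derivative $f^{(k)}$. The key check is the linear system
$$
\sum_{j=0}^6\beta_j^{(k)}\,\frac{\mu_{jk'}}{k'!}=\delta_{kk'}\,d_k,\qquad k,k'=0,\dots,6 .
$$
In words, the polynomial $\sum_j\beta_j y^j$ should act as a Legendre-type dual basis: $\int p_k(y)y^{k'}dy=k!\,\delta_{kk'}$ for degree-$6$ polynomials $p_k$. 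With this, $S_2(t)f=t^2D_f+O(t^{9/4})$, and the cross terms $t^{(k'-k)/4}$ with $k'\neq k$ vanish for $k'\le 6$. The large rational coefficients ($14553/64$, etc.) are precisely the entries of the inverse Hilbert-type moment matrix. I would verify these entries by symbolic computation, as the paper indicates.

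The main obstacle is here: the bookkeeping that the $t^1$ terms from $\beta_0,\beta_2,\dots$ paired with $f$ and with $f''$ cancel correctly. In particular, $\frac{14553}{64}a^2t$ must produce only an $a^2f^{(4)}t^2$ contribution and nothing at orders $t^1$ or $t^{3/2}$.

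\emph{Item 4.} From the explicit remainders we get
$$
\|S(t)f-f-tHf-\tfrac{t^2}{2}H^2f\|\le t^{2+\alpha}\sum_{i\le 8}B_i\|f^{(i)}\|\quad\text{for }t\in(0,1],\ \alpha=1 .
$$
Powers $t^{9/4},t^{5/2},\dots$ must be bounded using $t\le T=1$. If genuine $t^{9/4}$ terms survive, they force $\alpha=1/4$ rather than $1$. I therefore need to check that the dual-basis orthogonality also kills $k'=7$ up to order $t^3$; otherwise, increase the Taylor order and use the parity of the $\beta_j$ (odd $j$ pair with odd $k$). This yields $\alpha=1$ and hence the rate $t^3/n^2$.

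For the norm condition, $\|S_2(t)\|\le\int\sum|\beta_j|\le Ct$ and $\|S_1(t)\|\le 1+\|c\|t$, so $\|S(t)\|\le 1+\sigma t$ and hence $\|S(t)^k\|\le e^{k\sigma t}$ with $M=1$. Statement 2 of Theorem \ref{teorApprDU2} then gives item 4 with $n\ge t$. Item 5 is statement 3 of Theorem \ref{teorApprDU2}, since $\alpha>1-m$.
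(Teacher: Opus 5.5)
Your proposal follows essentially the paper's proof: Theorem~\ref{teorApprDU2} with $m=2$, Taylor expansion of both integrals, the Hilbert-type moment system for the $\beta_j$ checked symbolically, and $\|S(t)\|\le 1+\eta t$ on $[0,1]$ with $M=1$. Your $q=4$ is in fact the consistent choice, since the remainder bound uses $f^{(8)}$ and so needs $2m+q=8$; the paper's $q=3$ gives the same $\hat q=4$. The $\alpha=1$ step you hedge on closes by the parity argument you name, not by orthogonality (the $k'=7$ moment does not vanish). The explicit Taylor terms of the first integral carry only integer powers of $t$, because odd powers of $y$ integrate to zero, so its remainder is $O(t^3)$ and not merely $O(t^{5/2})$. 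In the second integral the $f^{(7)}$ term sees only $\beta_1,\beta_3,\beta_5=O(t^{5/4})$, giving $O(t^{7/4}\,t^{5/4})=O(t^3)$, and the order-$8$ Lagrange remainder is $t^2\cdot O(t)$, so no $t^{9/4}$, $t^{5/2}$ or $t^{11/4}$ terms survive.
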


\begin{proof}
\hfill \\ 

1. Let's check the conditions of the theorem \ref{firstform}. For $m=2$, $q=3$ we get $\hat{q}=2\lfloor(q+1)/2\rfloor=2\lfloor(3+1)/2\rfloor=4$, $2m+\hat{q}-2=6$, $2m+\hat{q}=8$. Therefore, we need $a,b,c\in HC_b^6(\mathbb{R})$.

By the condition, we have $\inf_{x\in\mathbb{R}}a(x)=a_0>0$.

Given functions $a, b, c\in HC_b^{6}(\mathbb{R})$,
such that $\inf_{x\in{\mathbb{R}}}a(x)>0$.
The operator $H$ on $UC_b(\mathbb{R})$ with domain $D(H)=UC_b^2(\mathbb{R})$ is defined by the formula
$Hu = au''+bu'+cu$ by the conditions of Theorem \ref{firstform}.
Item 1 is proved.

2. Follows from point 1 of theorem \ref{firstform} and point 2 of proposition \ref{semig}. Item 1 is proved.

3. We will not just formally prove item 3, but also show the way that we followed when constructing (\ref{Chernofffunc}) in such a way that item 2 will be true. Probably this is the most interesting part of the paper. The reasoning is long, so we divide it into steps 1-8.

Step 1. We use the Taylor expansion for the first term and represent the residual terms in the Lagrange form. Then we group the terms in powers of t. This is important, because terms with powers of t equal to 3 or higher we can define as $o(t^2)$. Also we note that the length of the segment $[-1,1]$ is 2.
\begin{multline*}
\int\limits_{-1}^{1}\frac{1}{2}(1+c(x)t)f\left(x+(6a(x)t)^{\frac{1}{2}}y+b(x)t\right)dy=(1+c(x)t)f(x)+(1+c(x)t)b(x)f'(x)t+
\\
+(1+c(x)t)\left(a(x)t+\frac{1}{2}(b(x))^2t^2\right)f''(x)+(1+c(x)t)\left((a(x)b(x)t^2+\frac{1}{6}(b(x))^3t^3\right)f'''(x)+
\\
+(1+c(x)t)\left(\frac{3}{10}(a(x))^2t^2+\frac{1}{2}a(x)(b(x))^2t^3+\frac{1}{24}(b(x))^4t^4\right)f^{(4)}(x))+
\\
\frac{1}{240}(1+c(x)t)\int\limits_{-1}^{1}\Big(6a(x)t)^{\frac{1}{2}}y+b(x)t\Big)^5f^{(5)}(\xi_1(t,x,y))dy=
\end{multline*}
\begin{multline*}
=f(x)+t\Big(a(x)f''(x)+b(x)f'(x)+c(x)f(x)\Big)+
\\
+\frac{t^2}{2}\Big(2b(x)c(x)f'(x)+\Big(b(x)^2+2a(x)c(x)\Big)f''(x)+2a(x)b(x)f'''(x)+\frac{3}{5}a(x)^2f^{(4)}(x)\Big)+
\\
+t^3\Big(\frac{1}{2}b(x)^2c(x)f''(x)+\Big(\frac{1}{6}b(x)^3+a(x)b(x)c(x)\Big)f'''(x)+\Big(\frac{1}{2}a(x)b(x)^2+\frac{3}{10}a(x)^2c(x)\Big)f^{(4)}(x)\Big)+
\\
+t^4\Big(\frac{1}{6}b(x)^3c(x)f'''(x)+\Big(\frac{1}{24}b(x)^4+\frac{1}{2}a(x)b(x)^2c(x)\Big)f^{(4)}(x)\Big)+
\\
\frac{t^5}{24}b(x)^4c(x)f^{(4)}(x)+
\\
+\frac{1}{240}(1+c(x)t)\int\limits_{-1}^{1}\Big(6a(x)t)^{\frac{1}{2}}y+b(x)t\Big)^5f^{(5)}(\xi_1)=
\end{multline*}
\begin{multline*}
=f(x)+t(Hf)(x)+
\\
+\frac{t^2}{2}\Big(2b(x)c(x)f'(x)+\Big(b(x)^2+2a(x)c(x)\Big)f''(x)+2a(x)b(x)f'''(x)+\frac{3}{5}a(x)^2f^{(4)}(x)\Big)+
\\
+o(t^2)
\end{multline*}
Recall that $Hf$ is expressed as follows:
$$Hf=af''+bf'+cf$$
$$(Hf)(x)=a(x)f''(x)+b(x)f'(x)+c(x)f(x)$$
Step 2. For convenience, we denote

$\left(2b(x)c(x)f'(x)+\left((b(x))^2+2a(x)c(x)\right)f''(x)+2a(x)b(x)f'''(x)+\frac{3}{5}f^{(4)}(x)\right)$ as $(Mf)(x)$. We get that
$$\int\limits_{-1}^{1}\frac{1}{2}(1+c(x)t)f\left(x+(6a(x)t)^{\frac{1}{2}}y+b(x)t\right)dy = 
f(x)+t(Hf)(x)+\frac{t^2}{2}(Mf)(x)+o(t^2)$$
So we get the condition on $\beta$:
\begin{multline*}
\int\limits_{-1}^{1}(\beta_0(t,x)+\beta_1(t,x)y+\beta_2(t,x)y^2+\beta_3(t,x)y^3+\beta_4(t,x)y^4+\beta_5(t,x)y^5+\beta_6(t,x)y^6)f(x+yt^{\frac{1}{4}})=\\
\frac{t^2}{2}\Big((H^2f)(x)-(Mf)(x)\Big)+o(t^2)
\end{multline*}
because $(S(t)f)(x)$ can be expressed by the following formula:
$$
(S(t)f)(x) = f(x)+t(Hf)(x)+\frac{t^2}{2}(H^2f)(x)+o(t^2)=
$$
$$
=f(x)+t(Hf)(x)+\frac{t^2}{2}(Mf)(x)+\frac{t^2}{2}\Big((H^2f)(x)-(Mf)(x)\Big)+o(t^2)
$$
Step 3. Recall the following formulas for, $Hf$ and $H^2f$
$$Hf=af''+bf'+cf$$
$$H^2f=aaf^{(4)}+(2aa'+2ab)f'''+(aa''+2ab'+2ac+ba'+bb)f''+(ab''+2ac'+bb'+2bc)f'+(ac''+bc'+cc)f$$

Step 4. We will write down $\frac{t^2}{2}\left(H^2f-Mf\right)$ and group its terms by derivatives $f(x)$.
\begin{multline*}
\frac{t^2}{2}(H^2f-Mf)=\frac{t^2}{2}aaf^{(4)}+\frac{t^2}{2}(2aa'+2ab)f'''+\frac{t^2}{2}(aa''+2ab'+2ac+ba'+bb)f''+ 
\\
+\frac{t^2}{2}(ab''+2ac'+bb'+2bc)f'+\frac{t^2}{2}(ac''+bc'+cc)f-
\\
-\frac{t^2}{2}\left(2bcf'+\left(bb+2ac\right)f''(x)+2abf'''+\frac{3}{5}aaf^{(4)}\right)=
\end{multline*}
$$
=\frac{t^2}{2}(ac''+bc'+cc)f+\frac{t^2}{2}(ab''+2ac'+bb')f'+\frac{t^2}{2}(aa''+2ab'+ba')f''+\frac{t^2}{2}(2aa')f'''+ \frac{t^2}{2}\frac{2}{5}aaf^{(4)}
$$
Step 5. We will rewrite $\frac{t^2}{2}((Hf)(x)-(Mf)(x))$ in a new form:
\begin{multline*}
\frac{t^2}{2}((Hf)(x)-(Mf)(x))=\alpha_0(t,x)f(x)+\alpha_1(t,x)f'(x)+\alpha_2(t,x)f''(x)+\alpha_3(t,x)f'''(x)+\\
+\alpha_4(t,x)f^{(4)}(x)+\alpha_5(t,x)f^{(5)}+\alpha_6(t,x)f^{(6)}
\end{multline*}
where
$$
\alpha_0(t,x) = \frac{t^2}{2}(a(x)c''(x)+b(x)c'(x)+c(x)c(x))
$$
$$
\alpha_1(t,x) = \frac{t^2}{2}(a(x)b''(x)+2a(x)c'(x)+b(x)b'(x))
$$
$$
\alpha_2(t,x) = \frac{t^2}{2}(a(x)a''(x)+2a(x)b'(x)+b(x)a'(x))
$$
$$
\alpha_3(t,x) =  \frac{t^2}{2}(2a(x)a'(x))
$$
$$
\alpha_4(t,x) = \frac{t^2}{2}\frac{2}{5}a(x)a(x)
$$
$$
\alpha_5(t,x)=0
$$
$$
\alpha_6(t,x)=0
$$
Step 6. We represent $\frac{t^2}{2}(Hf-Mf)(x)$ as an integral. The subinteral expression is the product of a polynomial in the new variable $y$ and $f (x + yt^{1/4})$:
\begin{multline*}
\frac{t^2}{2}(Hf-Mf)(x)=\alpha_0(t,x)f(x)+\alpha_1(t,x)f'(x)+\alpha_2(t,x)f''(x)+\alpha_3(t,x)f'''(x)+\alpha_4(t,x)f^{(4)}(x)+o(t^2)=\\
=\int\limits_{-1}^{1}\left(\beta_0(t,x)+\beta_1(t,x)y+\beta_2(t,x)y^2+\beta_3(t,x)y^3+\beta_4(t,x)y^4+\beta_5(t,x)y^5+\beta_6(t,x)y^6\right)f(x+y)dy
\end{multline*}

Step 7. We will find the relationship between the coefficients $\alpha$ and $\beta$

decompose $f(x + yt^{1/4})$ by Taylor's formula. 
Thus, we obtain the following system of equations:
$$
\alpha_0 = \int\limits_{-1}^{1}(\beta_0+\beta_1y+\beta_2y^2+\beta_3y^3+\beta_4y^4)dy
$$
$$
\alpha_1 = \int\limits_{-1}^{1}(\beta_0+\beta_1y+\beta_2y^2+\beta_3y^3+\beta_4y^4)yt^{1/4}dy
$$
$$
\alpha_2 = \int\limits_{-1}^{1}\frac{1}{2}(\beta_0+\beta_1y+\beta_2y^2+\beta_3y^3+\beta_4y^4+\beta_5y^5+\beta_6y^6)y^2t^{2/4}dy
$$
$$
\alpha_3 = \int\limits_{-1}^{1}\frac{1}{6}(\beta_0+\beta_1y+\beta_2y^2+\beta_3y^3+\beta_4y^4+\beta_5y^5+\beta_6y^6)y^3t^{3/4}dy
$$
$$
\alpha_4 = \int\limits_{-1}^{1}\frac{1}{24}(\beta_0+\beta_1y+\beta_2y^2+\beta_3y^3+\beta_4y^4+\beta_5y^5+\beta_6y^6)y^4t^{4/4}dy
$$
$$
\alpha_5 = \int\limits_{-1}^{1}\frac{1}{120}(\beta_0+\beta_1y+\beta_2y^2+\beta_3y^3+\beta_4y^4+\beta_5y^5+\beta_6y^6)y^5t^{5/4}dy
$$
$$
\alpha_6 = \int\limits_{-1}^{1}\frac{1}{720}(\beta_0+\beta_1y+\beta_2y^2+\beta_3y^3+\beta_4y^4+\beta_5y^5+\beta_6y^6)y^6t^{6/4}dy
$$
we express the integrals and reduce this system of equations to the linear:

$$
\alpha_0 = 2\beta_0+\frac{2}{3}\beta_2+\frac{2}{5}\beta_4+\frac{2}{7}\beta_6
$$
$$
\alpha_1 = t^{1/4}\left(\frac{2}{3}\beta_1+\frac{2}{5}\beta_3+\frac{2}{7}\beta_5\right)
$$
$$
\alpha_2 = \frac{t^{2/4}}{2}\left(\frac{2}{3}\beta_0+\frac{2}{5}\beta_2+\frac{2}{7}\beta_4+\frac{2}{9}\beta_6\right)
$$
$$
\alpha_3 =  \frac{t^{3/4}}{6}\left(\frac{2}{5}\beta_1+\frac{2}{7}\beta_3+\frac{2}{9}\beta_5\right)
$$
$$
\alpha_4 = \frac{t^{4/4}}{24}\left(\frac{2}{5}\beta_0+\frac{2}{7}\beta_2+\frac{2}{9}\beta_4+\frac{2}{11}\beta_6\right)
$$
$$
\alpha_5 = \frac{t^{5/4}}{120}\left(\frac{2}{7}\beta_1+\frac{2}{9}\beta_3+\frac{2}{11}\beta_5\right)
$$
$$
\alpha_6 = \frac{t^{6/4}}{720}\left(\frac{2}{7}\beta_0+\frac{2}{9}\beta_2+\frac{2}{11}\beta_4+\frac{2}{13}\beta_6\right)
$$
Step 8. Denote $\alpha = (\alpha_0,\alpha_1,\alpha_2,\alpha_3,\alpha_4,\alpha_5, \alpha_6)$, $\beta = (\beta_0,\beta_1,\beta_2,\beta_3,\beta_4, \beta_5, \beta_6)$. 

Let $\alpha$ and $\beta$ be connected by matrix $A$. 
$$
A=
\begin{pmatrix}
\frac{1}{0!}\cdot 2 & 0 & \frac{1}{0!}\cdot\frac{2}{3} & 0 & \frac{1}{0!}\cdot\frac{2}{5} & 0 & \frac{1}{0!}\cdot\frac{2}{7}\\
0 & \frac{t^{1/4}}{1!}\cdot\frac{2}{3} & 0 & \frac{t^{1/4}}{1!}\cdot\frac{2}{5} & 0 & \frac{t^{1/4}}{1!}\cdot\frac{2}{7} & 0\\
\frac{t^{2/4}}{2!}\cdot\frac{2}{3} & 0 & \frac{t^{2/4}}{2!}\cdot\frac{2}{5} & 0 & \frac{t^{2/4}}{2!}\cdot\frac{2}{7}& 0 & \frac{t^{2/4}}{2!}\cdot\frac{2}{9}\\
0 & \frac{t^{3/4}}{3!}\cdot\frac{2}{5} & 0 & \frac{t^{3/4}}{3!}\cdot\frac{2}{7} & 0 & \frac{t^{3/4}}{3!}\cdot\frac{2}{9} & 0\\
\frac{t^{4/4}}{4!}\cdot\frac{2}{5} & 0 & \frac{t^{4/4}}{4!}\cdot\frac{2}{7} & 0 & \frac{t^{4/4}}{4!}\cdot\frac{2}{9} & 0 & \frac{t^{4/4}}{4!}\cdot\frac{2}{11}\\
0 & \frac{t^{5/4}}{5!}\cdot\frac{2}{7} & 0 & \frac{t^{5/4}}{5!}\cdot\frac{2}{9} & 0 & \frac{t^{5/4}}{5!}\cdot\frac{2}{11} & 0\\
\frac{t^{6/4}}{6!}\cdot\frac{2}{7} & 0 & \frac{t^{6/4}}{4!}\cdot\frac{2}{9} & 0 & \frac{t^{6/4}}{4!}\cdot\frac{2}{11} & 0 & \frac{t^{6/4}}{6!}\cdot\frac{2}{13}
\end{pmatrix},\ \ 
$$

\begin{multline*}
\beta_0(t,x)=
\frac{1225}{1024}\Big(a(x)c''(x)+b(x)c'(x)+c(x)^2\Big)t^2-\\
-\frac{11025}{512}\Big(a(x)a''(x)+2a(x)b'(x)+b(x)a'(x)\Big)t^{3/2}+\frac{14553}{64}a(x)^2t,
\end{multline*}
$$
\beta_1(t,x)=\frac{3675}{256}\Big(a(x)b''(x)+2a(x)c'(x)+b(x)b'(x)\Big)t^{7/4}-\frac{19845}{32}a(x)a'(x)t^{5/4},
$$
\begin{multline*}
\beta_2(t,x)=
-\frac{11025}{1024}\Big(a(x)c''(x)+b(x)c'(x)+c(x)^2\Big)t^2+\\
+\frac{178605}{512}\Big(a(x)a''(x)+2a(x)b'(x)+b(x)a'(x)\Big)t^{3/2}-\frac{280665}{64}a(x)^2t,
\end{multline*}
$$
\beta_3(t,x)
=-\frac{6615}{128}\Big(a(x)b''(x)+2a(x)c'(x)+b(x)b'(x)\Big)t^{7/4}+\frac{42525}{16}a(x)a'(x)t^{5/4},
$$
\begin{multline*}
\beta_4(t,x)
=\frac{24255}{1024}\Big(a(x)c''(x)+b(x)c'(x)+c(x)^2\Big)t^2-\\
-\frac{467775}{512}\Big(a(x)a''(x)+2a(x)b'(x)+b(x)a'(x)\Big)t^{3/2}+\frac{800415}{64}a(x)^2t.
\end{multline*}
$$
\beta_5(t,x)
=\frac{10395}{256}\Big(a(x)b''(x)+2a(x)c'(x)+b(x)b'(x)\Big)t^{7/4}-\frac{72765}{32}a(x)a'(x)t^{5/4},
$$
\begin{multline*}
\beta_6(t,x)
=-\frac{15015}{1024}\Big(a(x)c''(x)+b(x)c'(x)+c(x)^2\Big)t^2+\\
+\frac{315315}{512}\Big(a(x)a''(x)+2a(x)b'(x)+b(x)a'(x)\Big)t^{3/2}-\frac{567567}{64}a(x)^2t.
\end{multline*}
Item 3 is proved.

4. We will prove that all the conditions of theorem \ref{teorApprDU2} are satisfied. We proved Condition 1 of theorem \ref{teorApprDU2} in point 1 of theorem \ref{firstform}
Let $T=1$. For any $f\in UC_b(\mathbb{R})$ and any $t\in[0,1]$ the following estimates hold:
\begin{multline*}
\|S(t)f\|=\sup_{x\in\mathbb{R}}\bigg|\int\limits_{-1}^{1}\frac{1}{2}(1+c(x)t)f(x+(6a(x)t)^{\frac{1}{2}}y+b(x)t)dy+\\
+\int\limits_{-1}^{1}(\beta_0(t,x)+\beta_1(t,x)y+\beta_2(t,x)y^2+\beta_3(t,x)y^3+\beta_4(t,x)y^4+\beta_5(t,x)y^5+\beta_6(t,x)y^6)f(x+yt^{\frac{1}{4}})dy\bigg|\leq
\end{multline*}
\begin{multline*}
\leq \sup_{x\in\mathbb{R}}\left|\int\limits_{-1}^{1}\frac{1}{2}(1+c(x)t)f(x+(6a(x)t)^{\frac{1}{2}}y+b(x)t)dy\right|+\\
+\sup_{x\in\mathbb{R}}\left|\int\limits_{-1}^{1}(\beta_0(t,x)+\beta_1(t,x)y+\beta_2(t,x)y^2+\beta_3(t,x)y^3+\beta_4(t,x)y^4+\beta_5(t,x)y^5+\beta_6(t,x)y^6)f(x+yt^{\frac{1}{4}})dy\right|\leq
\end{multline*}
\begin{multline*}
\leq \frac{1}{2}\int\limits_{-1}^{1}\sup_{x\in\mathbb{R}}|f(x+(6a(x)t)^{\frac{1}{2}}y+b(x)t)|dy+\frac{1}{2}t\int\limits_{-1}^{1}\sup_{x\in\mathbb{R}}|c(x)|\sup_{x\in\mathbb{R}}|f(x+(6a(x)t)^{\frac{1}{2}}y+b(x)t)|dy+\\
+\sum_{k=0}^6\sup_{x\in\mathbb{R}}\left|\int\limits_{-1}^{1}\beta_k(t,x)f(x+yt^{\frac{1}{4}})y^kdy\right|\leq\\ 
\leq\frac{1}{2}\|f\|\int\limits_{-1}^{1}dy+\frac{1}{2}t\|c\|\|f\|\int\limits_{-1}^{1}dy+\|f\|\sum_{k=0}^6\|\beta_k(t,\cdot)\|\int\limits_{-1}^{1}|y^k|dy=\\
=\|f\|\left(1+t\|c\|+\sum_{k=0}^4\|\beta_k(t,\cdot)\|\int\limits_{-1}^{1}|y^k|dy\right)\leq (1+\eta t)\|f\|,
\end{multline*}.

Thus, for the corresponding constant $\eta>0$ we have $\|S(t)\|\leq 1+\eta t$ for all $t\in[0,1]$.
Indeed, the condition $t\in[0,1]$ implies $t^\mu\leq t$ for all $\mu\geq 1$. In the definition of $\beta_k$ (see the formulas below formula (\ref{Chernofffunc})) the term $t^\mu$ appears only for $\mu\geq 1$ and is multiplied by bounded functions of the variable $x$ (see the very beginning of the conditions of the theorem \ref{firstform}). This means that there exist constants $\eta_k$ such that $\|\beta_k(t,\cdot)\|\leq \eta_kt$ for all $t\in[0,1]$. Using the estimate $\|S(t)f\|\leq \|f\|\left(1+t\|c\|+\sum_{k=0}^4\|\beta_k(t,\cdot)\|\int\limits_{-1}^{1}|y^k|dy\right)$ proved above, it is easy (although in fact this is not required) to write down a formula expressing $\eta$ in terms of the norms of the functions $a,b,c$ and their derivatives up to the second order.
For a some constant $\eta>0$, $\|S(t)\|\leq 1+\eta t$ holds for all $t\in[0,1]$, so (N') is true. This means that $\|S(t)\|\leq e^{\eta t}$ holds for any $t\in(0,T]$ and $T=1$. From the condition of theorem \ref{teorApprDU2}, consider the numbers $M$ and $\sigma$. Here we get: $M=1$, and $\sigma = \eta$. $\|S(t)^k\| \leq Me^{k\sigma t}$. The second condition  theorem \ref{teorApprDU2} is satisfied.

Suppose $f \in UC_b^8(\mathbb{R})$. We use the Taylor expansion for the first derivative up to the 6th power and the second term up to the 8th power, and represent the remainder terms in Lagrange form. We then group the terms by powers of $t$.

\begin{multline*}
 \int\limits_{-1}^{1}\frac{1}{2}\Big(1+c(x)t\Big)f\Big(x+(6a(x)t)^{\frac{1}{2}}y+b(x)t\Big)dy=f(x)+t(Hf)(x)+\frac{t^2}{2}(Mf)(x)+\\
 +\Big(\frac{1}{6}b(x)^3t^3+a(x)b(x)c(x)t^3+\frac{1}{6}b(x)^3c(x)t^4\Big)f^{(3)}(x)+\\
 +\Big(\frac{1}{2}a(x)b(x)^2t^3+\frac{3}{10}a(x)^2c(x)t^3+\frac{1}{24}b(x)^4t^4+\frac{1}{2}a(x)b(x)^2c(x)t^4+\frac{1}{24}b(x)^4c(x)t^5\Big)f^{(4)}(x)\\
 +\Big(\frac{3}{10}a(x)^2bt^3+\frac{1}{6}a(x)b(x)^3t^4+\frac{3}{10}a(x)^2b(x)c(x)t^4+\frac{1}{120}b(x)^5t^5+\frac{1}{6}a(x)b(x)^3c(x)t^5+\frac{1}{120}b(x)^5c(x)t^6\Big)f^{(5)}(x)+\\
 +\frac{1}{1440}(1+c(x)t)\int\limits_{-1}^{1}\Big(6a(x)t)^{\frac{1}{2}}y+b(x)t\Big)^6f^{(6)}(\xi_1)dy
\end{multline*}
\begin{multline*}
\int\limits_{-1}^{1}\Big(\beta_0(t,x)+\beta_1(t,x)y+\beta_2(t,x)y^2+\beta_3(t,x)y^3+\beta_4(t,x)y^4+\beta_5(t,x)y^5+\beta_6(t,x)y^6\Big)f(x+yt^{\frac{1}{4}})dy=\\
=\frac{t^2}{2}\Big((H^2f)(x)-(Mf)(x)\Big)+\frac{1}{120}\Big(\beta_0(t,x)t^{5/4}\int\limits_{-1}^{1}y^5dy+\beta_1(t,x)t^{5/4}\int\limits_{-1}^{1}y^6dy+\beta_2(t,x)t^{5/4}\int\limits_{-1}^{1}y^7dy+\\
+\beta_3(t,x)t^{5/4}\int\limits_{-1}^{1}y^8dy+\beta_4(t,x)t^{5/4}\int\limits_{-1}^{1}y^9dy+\beta_5(t,x)t^{5/4}\int\limits_{-1}^{1}y^{10}+\beta_6(t,x)t^{5/4}\int\limits_{-1}^{1}y^{11}dydy\Big)f^{(5)}(x)+\\
+\frac{1}{6!}\Big(\beta_0(t,x)t^{6/4}\int\limits_{-1}^{1}y^6dy+\beta_1(t,x)t^{6/4}\int\limits_{-1}^{1}y^7dy+\beta_2(t,x)t^{6/4}\int\limits_{-1}^{1}y^8dy+\\
+\beta_3(t,x)t^{6/4}\int\limits_{-1}^{1}y^9dy+\beta_4(t,x)t^{6/4}\int\limits_{-1}^{1}y^{10}+\beta_5(t,x)t^{6/4}\int\limits_{-1}^{1}y^{11}+\beta_6(t,x)t^{6/4}\int\limits_{-1}^{1}y^{12}dy\Big)f^{(6)}(x)+
\\
+\frac{1}{7!}\Big(\beta_0(t,x)t^{7/4}\int\limits_{-1}^{1}y^7dy+\beta_1(t,x)t^{7/4}\int\limits_{-1}^{1}y^8dy+\beta_2(t,x)t^{7/4}\int\limits_{-1}^{1}y^9dy+\\
+\beta_3(t,x)t^{7/4}\int\limits_{-1}^{1}y^{10}dy+\beta_4(t,x)t^{7/4}\int\limits_{-1}^{1}y^{11}+\beta_5(t,x)t^{7/4}\int\limits_{-1}^{1}y^{12}+\beta_6(t,x)t^{7/4}\int\limits_{-1}^{1}y^{13}dy\Big)f^{(7)}(x)+
\\
+\frac{1}{8!}\int\limits_{-1}^{1}\Big(\beta_0(t,x)+\beta_1(t,x)y+\beta_2(t,x)y^2+\beta_3(t,x)y^3+\beta_4(t,x)y^4+\beta_5(t,x)y^5+\beta_6(t,x)y^6\Big)y^8t^{2}f^{(8)}(\xi_2)dy=
\end{multline*}
\begin{multline*}
=\frac{t^2}{2}\Big((H^2f)(x)-(Mf)(x)\Big)+\frac{1}{7!}\Big(\beta_1(t,x)t^{7/4}\int\limits_{-1}^{1}y^8dy+\beta_3(t,x)t^{7/4}\int\limits_{-1}^{1}y^{10}dy+\beta_5(t,x)t^{7/4}\int\limits_{-1}^{1}y^{12}dy\Big)f^{(7)}(x)+\\
+\frac{1}{8!}\int\limits_{-1}^{1}\Big(\beta_0(t,x)+\beta_1(t,x)y+\beta_2(t,x)y^2+\beta_3(t,x)y^3+\beta_4(t,x)y^4\Big)y^8t^{2}f^{(8)}(\xi_2)dy.
\end{multline*}

We add the first and second terms. Thus, we obtain $(S(t)f)(x)$ in the following form:

\begin{multline*}
(S(t)f)(x) = \int\limits_{-1}^{1}\frac{1}{2}\Big(1+c(x)t\Big)f\Big(x+(6a(x)t)^{\frac{1}{2}}y+b(x)t\Big)dy+
\\
+\int\limits_{-1}^{1}\Big(\beta_0(t,x)+\beta_1(t,x)y+\beta_2(t,x)y^2+\beta_3(t,x)y^3+\beta_4(t,x)y^4\Big)f(x+yt^{\frac{1}{4}})dy = 
\end{multline*}
$$
=f(x)+t(Hf)(x)+\frac{t^2}{2}(H^2f)(x)+\\
+\Big(\frac{1}{6}b(x)^3t^3+a(x)b(x)c(x)t^3+\frac{1}{6}b(x)^3c(x)t^4\Big)f^{(3)}(x)+
$$
\begin{multline*}
 +\Big(\frac{1}{2}a(x)b(x)^2t^3+\frac{3}{10}a(x)^2c(x)t^3+\frac{1}{24}b(x)^4t^4+\frac{1}{2}a(x)b(x)^2c(x)t^4+\frac{1}{24}b(x)^4c(x)t^5\Big)f^{(4)}(x)\\
 +\Big(\frac{3}{10}a(x)^2bt^3+\frac{1}{6}a(x)b(x)^3t^4+\frac{3}{10}a(x)^2b(x)c(x)t^4+\frac{1}{120}b(x)^5t^5+\frac{1}{6}a(x)b(x)^3c(x)t^5+\frac{1}{120}b(x)^5c(x)t^6\Big)f^{(5)}(x)+
 \end{multline*}
 \begin{multline*}
 +\frac{1}{1440}(1+c(x)t)\int\limits_{-1}^{1}\Big(6a(x)t)^{\frac{1}{2}}y+b(x)t\Big)^6f^{(6)}(\xi_1)dy+
\\
+\frac{1}{7!}\Big(\beta_1(t,x)t^{7/4}\int\limits_{-1}^{1}y^8dy+\beta_3(t,x)t^{7/4}\int\limits_{-1}^{1}y^{10}dy+\beta_5(t,x)t^{7/4}\int\limits_{-1}^{1}y^{12}dy\Big)f^{(7)}(x)+
\end{multline*}
$$
+\frac{1}{8!}\int\limits_{-1}^{1}\Big(\beta_0(t,x)+\beta_1(t,x)y+\beta_2(t,x)y^2+\beta_3(t,x)y^3+\beta_4(t,x)y^4+\beta_5(t,x)y^5+\beta_6(t,x)y^6\Big)y^8t^{8/4}f^{(8)}(\xi_2)dy.
$$

Next we see that

\begin{multline*}
\Bigg\|S(t)f-f-tHf-\frac{t^2}{2}H^2f\Bigg\|=\sup_{x\in\mathbb{R}}\Bigg|(S(t)f)(x)-f(x)-t(Hf)(x)-\frac{t^2}{2}(H^2f)(x)\Bigg|=
\end{multline*}
\begin{multline*}
=\sup_{x\in\mathbb{R}}\Bigg|\Big(\frac{1}{6}b(x)^3t^3+a(x)b(x)c(x)t^3+\frac{1}{6}b(x)^3c(x)t^4\Big)f^{(3)}(x)+\\
 +\Big(\frac{1}{2}a(x)b(x)^2t^3+\frac{3}{10}a(x)^2c(x)t^3+\frac{1}{24}b(x)^4t^4+\frac{1}{2}a(x)b(x)^2c(x)t^4+\frac{1}{24}b(x)^4c(x)t^5\Big)f^{(4)}(x)+
\end{multline*}
\begin{multline*}
 +\Big(\frac{3}{10}a(x)^2b(x)t^3+\frac{1}{6}a(x)b(x)^3t^4+\frac{3}{10}a(x)^2b(x)c(x)t^4+\frac{1}{120}b(x)^5t^5+\frac{1}{6}a(x)b(x)^3c(x)t^5+\\
 +\frac{1}{120}b(x)^5c(x)t^6\Big)f^{(5)}(x)+\frac{1}{1440}(1+c(x)t)\int\limits_{-1}^{1}\Big(6a(x)t)^{\frac{1}{2}}y+b(x)t\Big)^6f^{(6)}(\xi_1)dy+
\end{multline*}
\begin{multline*}
+\frac{1}{7!}\Big(\beta_1(t,x)t^{7/4}\int\limits_{-1}^{1}y^8dy+\beta_3(t,x)t^{7/4}\int\limits_{-1}^{1}y^{10}dy+\beta_5(t,x)t^{7/4}\int\limits_{-1}^{1}y^{12}dy\Big)f^{(7)}(x)\\
+\frac{1}{8!}\int\limits_{-1}^{1}\Big(\beta_0(t,x)+\beta_1(t,x)y+\beta_2(t,x)y^2+\beta_3(t,x)y^3+\beta_4(t,x)y^4\Big)y^8t^{2}f^{(8)}(\xi_2)dy\Bigg|\leq
\end{multline*}
\begin{multline*}
\leq\sup_{x\in\mathbb{R}}\Bigg|\Big(\frac{1}{6}b(x)^3t^3+a(x)b(x)c(x)t^3+\frac{1}{6}b(x)^3c(x)t^4\Big)f^{(3)}(x)\Bigg|+\\
 +\sup_{x\in\mathbb{R}}\Bigg|\Big(\frac{1}{2}a(x)b(x)^2t^3+\frac{3}{10}a(x)^2c(x)t^3+\frac{1}{24}b(x)^4t^4+\frac{1}{2}a(x)b(x)^2c(x)t^4+\frac{1}{24}b(x)^4c(x)t^5\Big)f^{(4)}(x)\Bigg|+
\end{multline*}
\begin{multline*}
 +\sup_{x\in\mathbb{R}}\Bigg|\Big(\frac{3}{10}a(x)^2b(x)t^3+\frac{1}{6}a(x)b(x)^3t^4+\frac{3}{10}a(x)^2b(x)c(x)t^4+\frac{1}{120}b(x)^5t^5+\frac{1}{6}a(x)b(x)^3c(x)t^5+\\
 +\frac{1}{120}b(x)^5c(x)t^6\Big)f^{(5)}(x)\Bigg|+\sup_{x\in\mathbb{R}}\Bigg|\frac{1}{1440}(1+c(x)t)\int\limits_{-1}^{1}\Big(6a(x)t)^{\frac{1}{2}}y+b(x)t\Big)^6f^{(6)}(\xi_1)dy\Bigg|+
\end{multline*}
\begin{multline*}
 +\sup_{x\in\mathbb{R}}\Big|\frac{1}{7!}\Big(\beta_1(t,x)t^{7/4}\int\limits_{-1}^{1}y^8dy+\beta_3(t,x)t^{7/4}\int\limits_{-1}^{1}y^{10}dy+\beta_5(t,x)t^{7/4}\int\limits_{-1}^{1}y^{12}dy\Big)f^{(7)}(x)\Big|+\\
+\sup_{x\in\mathbb{R}}\Bigg|\frac{1}{8!}\int\limits_{-1}^{1}\Big(\beta_0(t,x)+\beta_1(t,x)y+\beta_2(t,x)y^2+\beta_3(t,x)y^3+\beta_4(t,x)y^4\Big)y^8t^{2}f^{(8)}(\xi_2)dy\Bigg|
\end{multline*}

We consider each term and evaluate them in terms of the norms of their derivatives. The functions $a$, $b$, $c$, $a'$, $b'$, $c'$, $a''$, $b''$, and $c''$ are bounded, so their sum and product are also bounded functions. We evaluate them in terms of the constants $K_{i,j,k}$, where $i$ is the number of terms, $j$ is the number of powers of $t$, and $k$ is the number of derivatives.

1. 
$$
\sup_{x\in\mathbb{R}}\Bigg|\Big(\frac{1}{6}b(x)^3t^3+a(x)b(x)c(x)t^3+\frac{1}{6}b(x)^3c(x)t^4\Big)f^{(3)}(x)\Bigg| \leq
K_{1,3,3}t^3\|f^{(3)}\|+K_{1,4,3}t^4\|f^{(3)}\|
$$

2.
\begin{multline*}
\sup_{x\in\mathbb{R}}\Bigg|\Big(\frac{1}{2}a(x)b(x)^2t^3+\frac{3}{10}a(x)^2c(x)t^3+\frac{1}{24}b(x)^4t^4+\frac{1}{2}a(x)b(x)^2c(x)t^4+\frac{1}{24}b(x)^4c(x)t^5\Big)f^{(4)}(x)\Bigg|\leq\\
\leq K_{2,3,4}t^3\|f^{(4)}\|+K_{2,4,4}t^4\|f^{(4)}\|+K_{2,5,4}t^5\|f^{(4)}\|
\end{multline*}

3.
\begin{multline*}
\sup_{x\in\mathbb{R}}\Bigg|\Big(\frac{3}{10}a(x)^2b(x)t^3+\frac{1}{6}a(x)b(x)^3t^4+\frac{3}{10}a(x)^2b(x)c(x)t^4+\frac{1}{120}b(x)^5t^5+\frac{1}{6}a(x)b(x)^3c(x)t^5+\\
 +\frac{1}{120}b(x)^5c(x)t^6\Big)f^{(5)}(x)\Bigg|\leq K_{3,3,5}t^3\|f^{(5)}\|+K_{3,4,5}t^4\|f^{(5)}\|+K_{3,5,5}t^5\|f^{(5)}\|+K_{3,6,5}t^6\|f^{(5)}\|
\end{multline*}

4.
\begin{multline*}
\sup_{x\in\mathbb{R}}\Bigg|\frac{1}{1440}(1+c(x)t)\int\limits_{-1}^{1}\Big(6a(x)t)^{\frac{1}{2}}y+b(x)t\Big)^6f^{(6)}(\xi_1)dy\Bigg|\leq\frac{1}{1440}(1+\|c\|t)\cdot 2\Big(6\|a\|^{\frac{1}{2}}t^{\frac{1}{2}}+\|b\|t\Big)^6\|f^{(6)}\|\leq\\
\leq K_{4,3,6}t^3\|f^{(6)}\|+K_{4,7/2,6}t^{7/2}\|f^{(6)}\|+K_{4,4,6}t^4\|f^{(6)}\|+K_{4,9/2,6}t^{9/2}\|f^{(6)}\|+K_{4,5,6}t^5\|f^{(6)}\|+\\
+K_{4,11/2,6}t^{11/2}\|f^{(6)}\|+K_{4,6,6}t^{6}\|f^{(6)}\|+K_{4,13/2,6}t^{13/2}\|f^{(6)}\|+K_{4,7,6}t^{7}\|f^{(6)}\|
\end{multline*}

5. 
\begin{multline*}
\sup_{x\in\mathbb{R}}\Big|\frac{1}{7!}\Big(\beta_1(t,x)t^{7/4}\int\limits_{-1}^{1}y^8dy+\beta_3(t,x)t^{7/4}\int\limits_{-1}^{1}y^{10}dy+\beta_5(t,x)t^{7/4}\int\limits_{-1}^{1}y^{12}dy\Big)f^{(7)}(x)\Big|
\leq\\ 
\leq K_{5,3,7}t^{3}\|f^{(7)}\|+K_{5,7/2,7}t^{7/2}\|f^{(7)}\|
\end{multline*}

6.
\begin{multline*}
\sup_{x\in\mathbb{R}}\Bigg|\frac{1}{8!}\int\limits_{-1}^{1}\Big(\beta_0(t,x)+\beta_1(t,x)y+\beta_2(t,x)y^2+\beta_3(t,x)y^3+\beta_4(t,x)y^4\Big)y^8t^{2}f^{(8)}(\xi_2)dy\Bigg|\leq\\
\leq \frac{2}{8!}\sup_{x\in\mathbb{R}}|\beta_0(t,x)|t^{2}\|f^{(8)}\|+\frac{2}{8!}\sup_{x\in\mathbb{R}}|\beta_1(t,x)|t^{2}\|f^{(8)}\|+\frac{2}{8!}\sup_{x\in\mathbb{R}}|\beta_2(t,x)|t^{2}\|f^{(8)}\|+\\
+\frac{2}{8!}\sup_{x\in\mathbb{R}}|\beta_3(t,x)|t^{2}\|f^{8}\|+\frac{2}{8!}\sup_{x\in\mathbb{R}}|\beta_4(t,x)|t^{2}\|f^{(8)}\|\leq
\end{multline*}
\begin{multline*}
\leq K_{6,3,8}t^{3}\|f^{(8)}\|+K_{6,13/4,8}t^{13/4}\|f^{(8)}\|+K_{6,7/2,8}t^{7/2}\|f^{(8)}\|+K_{6,15/4,8}t^{15/4}\|f^{(8)}\|+K_{6,4,8}t^{4}\|f^{(8)}\|
\end{multline*}

Then the following is true:

\begin{multline*}
\Bigg\|S(t)f-f-tHf-\frac{t^2}{2}H^2f\Bigg\|\leq \\
\leq \Big(t^3+t^{13/4}+t^{7/2}+t^{15/4}+t^{4}+t^{9/2}+t^{5}+t^{11/2}+t^{6}+t^{13/2}+t^{7}\Big)\sum_{j=0}^{8}K_j\|f^{(j)}\|.
\end{multline*}

where $K_0=K_1$=$K_2=0$,
$K_3=\max(K_{1,3,3},K_{1,4,3})$,
$K_4=\max(K_{2,3,4},K_{2,4,4},K_{2,5,4})$,

$K_5=\max(K_{3,3,5},K_{3,4,5},K_{3,5,5},K_{3,6,5}$,

$K_6=\max(K_{4,3,6},K_{4,7/2,6},K_{4,4,6},K_{4,9/2,6},K_{4,5,6},K_{4,11/2,6},K_{4,6,6},K_{4,13/2,6},K_{4,7,6}$,

$K_7=\max(K_{7,3,7},K_{7,7/2,7})$,
$K_8=\max(K_{8,3,8},K_{8,13/4,8},K_{8,7/2,8},K_{8,15/4,8},K_{8,4,8})$

$K_0,K_1,\ldots,K_{8}$ are non-negative constants independent of $t$.
\\
Consider $t\in(0,1]$.

In this case, the following inequality holds:

$$
t^3+t^{13/4}+t^{7/2}+t^{15/4}+t^{4}+t^{9/2}+t^{5}+t^{11/2}+t^{6}+t^{13/2}+t^{7} \leq 11t^3
$$

We denote $B_j=11K_j$, then

$$
\Bigg\|S(t)f-f-tHf-\frac{t^2}{2}H^2f\Bigg\|\leq t^3\sum_{j=0}^{8}B_j\|f^{(j)}\|.
$$
This is true for every $t\in(0,T]$, $T=1$. Moreover,
$w=\max(\sigma,\gamma,0)$, where $\gamma=\sup_{x\in\mathbb{R}}c(x)$. We obtain that $\sigma=\eta$. We obtain: $w=\max(\eta,\sup_{x\in\mathbb{R}}c(x),0)$. Condition 3 theorem \ref{teorApprDU2} is satisfied.
Item 4 is proved.

5. It follows from the proof of item 4. All conditions of theorem \ref{teorApprDU2} are satisfied.

Theorem \ref{firstform} is proven.

\end{proof}





\section*{Acknowledgements}

Authors are thankful to O.E.~Galkin for fruitful discussion of the issues raised in the paper. Theorem 4 was created in the Dobrushin's mathematical laboratory IITP RAS. Theorem 5 was created in the HSE University Nizhny Novgorod.

\newpage
\section{Appendix: symbolic computation verification of the formula for $S(t)$}

As can be seen in the text above, many concrete number coefficients were obtained via rather lengthy calculations, which potentially leaves space for a mistake to occur. In order to prove that the formula presented in (\ref{Chernofffunc}) is correct and indeed has the expansion in $t$ as we state in item 3 of Theorem \ref{firstform}, we wrote a program on Wolfram Language. We aim to check that 
$$S(t)f= f+tHf+\frac{1}{2}(tH)^2+o(t^2)$$ 
as $t \to +0$ for each $f\in C_b^\infty(\mathbb{R})$. We show the code (input) and the result (output), so everyone can check and make sure that we found the correct number values of the coefficients. We present several screenshots and comments to them.

For brevity, we omit argument $x$ below in functions $a,b,c$ and their derivatives, i.e. we write $a,a',a'',b,b',b'',c,c',c''$ instead of $a(x),a'(x),a''(x),b(x),b'(x),b''(x),c(x),c'(x),c''(x)$. 
This omitting is justifiable because, first, only derivatives of function $f$ are calculated symbolically, and second, $x$ is constant when we expand $f$ via Taylor's formula (assuming $t\to 0$) and integrate in $y$. However, we keep $x$ in argument of function $f$ and it is not possible to omit it. 
\vskip 2cm
There will be some white spaces below because we wish our screenshots be of appropriate (readable) size and followed by the comments on the same page. 
First, we write $\beta_0$, ..., $\beta_6$, see Wolfram input and output below:

\begin{figure}[H]
\includegraphics[width=18cm, height=10cm]{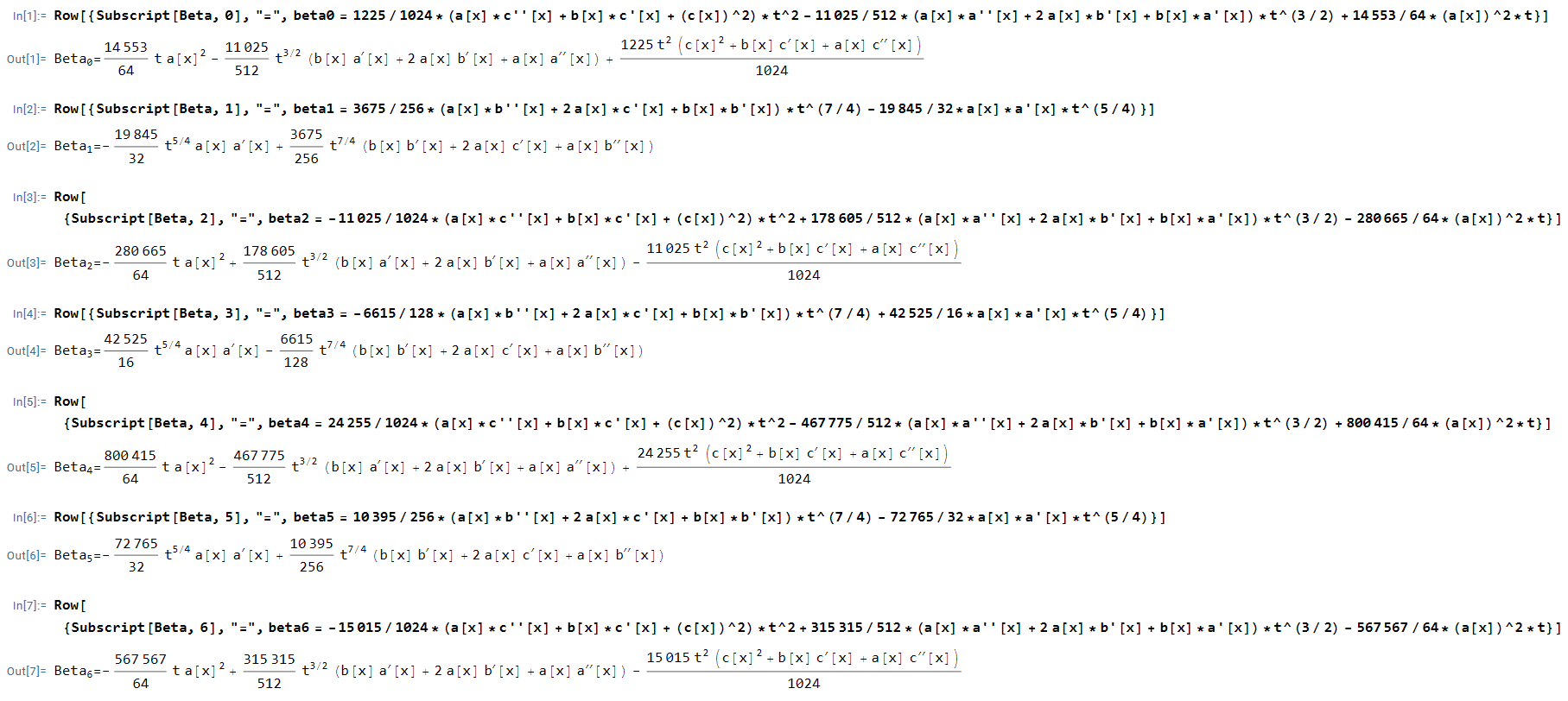}
\caption{Verification of the Chernoff function from Theorem 5 using Wolfram Mathematica 13.3. Part 1/5.}
\label{th5pic1}
\end{figure}

\newpage
We use symbol $Theorem_5\_Taylor_1$ to denote the Taylor expansion of the function\\ $x\mapsto f(x+(6a(x)t)^{1/2}y+b(x)t)$  at the point $x$ up to the 5th derivative, assuming that number  $y\in[-1,1]$ is fixed and $t\to 0$, which gives us that $(6a(x)t)^{1/2}y+b(x)t\to0$. The residual terms with derivatives with powers greater than 5 we can represent in the Lagrange's form for $f \in C_b^6(\mathbb{R})$. 

We use symbol $Theorem_5\_Taylor_2$ to denote the Taylor expansion of the function
$x\mapsto f(x+yt^{1/4})$  at the point $x$ up to the 7th derivative, assuming that number  $y\in[-1,1]$ is fixed and $t\to 0$, which gives us that $yt^{1/4})$. The residual terms with derivatives with powers greater than 7 we can represent in the Lagrange's form for $f \in C_b^8(\mathbb{R})$.


Then, we integrate $\int\limits_{-1}^{1}\frac{1}{2}\Big(1+c(x)t\Big)Theorem_5\_Taylor_1dy+\int\limits_{-1}^{1}\Big(\beta_0(t,x)+\beta_1(t,x)y+\beta_2(t,x)y^2+\beta_3(t,x)y^3+\beta_4(t,x)y^4+\beta_5(t,x)y^5+\beta_6(t,x)y^6\Big)Theorem_5\_Taylor_2dy$ and denote this integral as $Theorem_5\_IntTaylorSum$.

We group the terms of this expression into powers of $t$ and denote $Theorem_5\_Sum$

\begin{figure}[H]
\includegraphics[width=18cm, height=12cm]{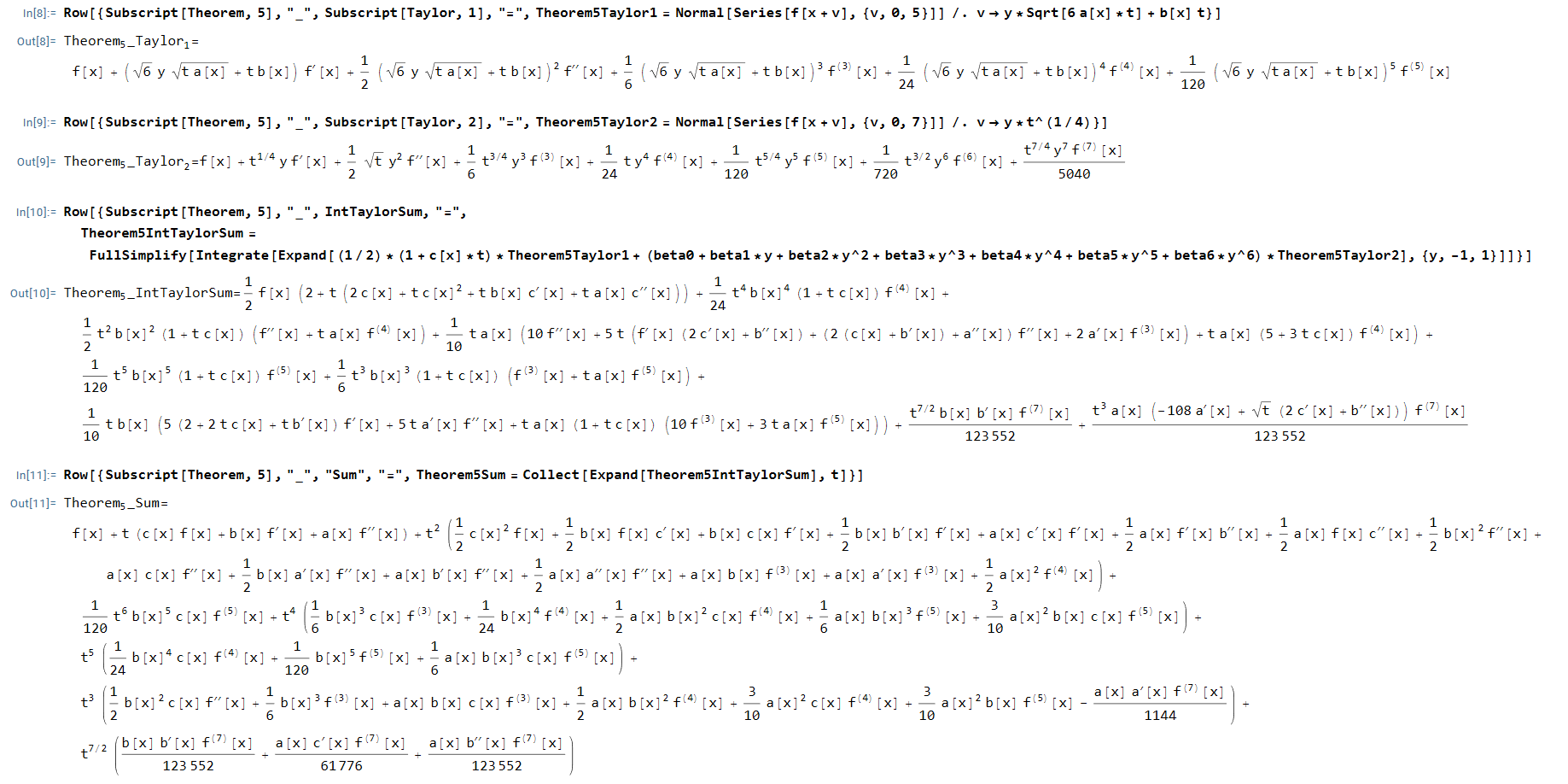}
\caption{Verification of the Chernoff function from Theorem 5 using Wolfram Mathematica 13.3. Part 2/5.}
\label{th5pic2}
\end{figure}

\newpage


We also consider The Taylor expansion of the function $f(x+(6a(x)t)^{1/2}y+b(x)t)$  at the point $x$.
We represent the remainder with the 6th derivative as a Lagrange's form and denote it $Teorem_5\_Lagrange$
Then we write the integral $\int\limits_{-1}^{1}\frac{1}{2}\Big(1+c(x)t\Big)Theorem_5\_Lagrange dy$
We denote this integral $Theorem_5\_IntLagrange$.

\begin{figure}[H]
\includegraphics[width=18cm, height=4cm]{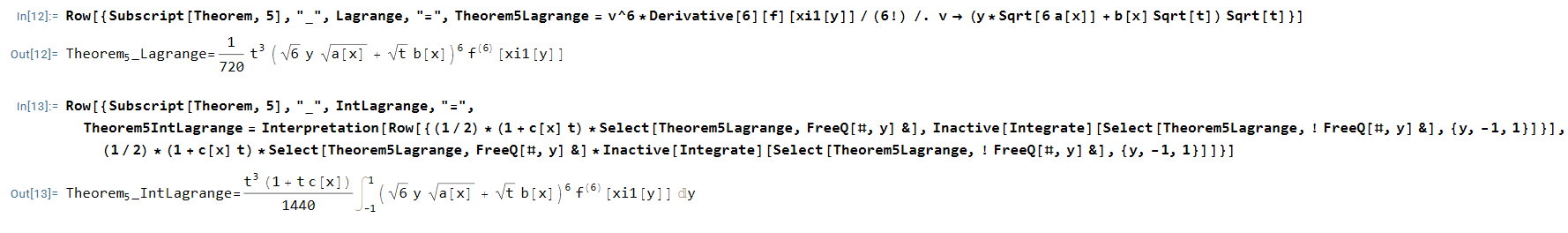}
\caption{Verification of the Chernoff function from Theorem 5 using Wolfram Mathematica 13.3. Part 3/5.}
\label{th5pic3}
\end{figure}

\newpage
We also consider The Taylor expansion of the function $f(x+yt^{1/4})$  at the point $x$.
We represent the remainder with the 8th derivative as a Lagrange's form, multiply by $\Big(\beta_0(t,x)+\beta_1(t,x)y+\beta_2(t,x)y^2+\beta_3(t,x)y^3+\beta_4(t,x)y^4+\beta_5(t,x)y^5+\beta_6(t,x)y^6\Big)$
 and denote it $Teorem_5\_BetaLagrange$
Then we write the integral $\int\limits_{-1}^{1}\frac{1}{2}\Big(1+c(x)t\Big)Theorem_5\_Lagrange dy$
We denote this integral $\int\limits_{-1}^{1}\frac{1}{2}\Big(1+c(x)t\Big)Theorem_5\_BetaLagrange dy$
We denote this integral 

$Theorem_5\_IntLagrange$.$Theorem_5\_IntBetaLagrange$.
\begin{figure}[H]
\includegraphics[width=18cm, height=9cm]{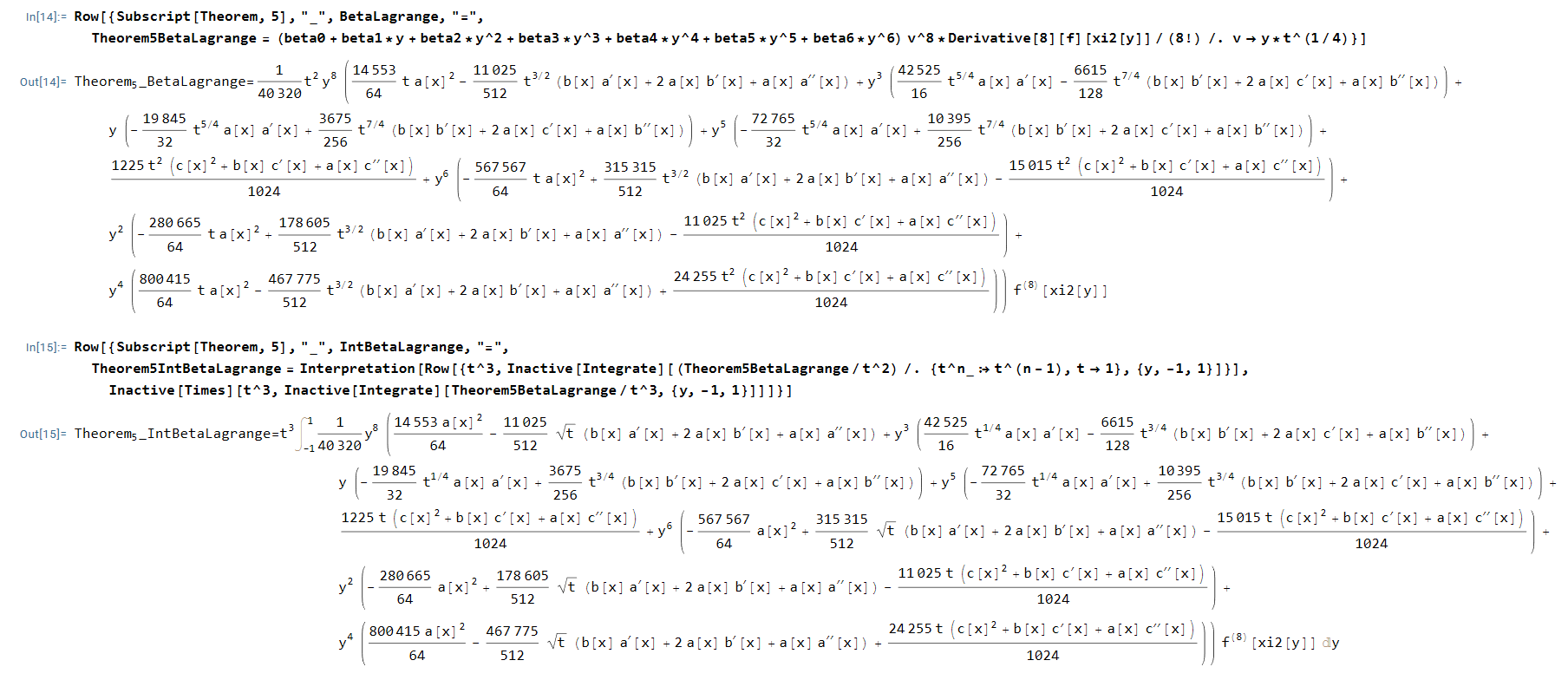}
\caption{Verification of the Chernoff function from Theorem 5 using Wolfram Mathematica 13.3. Part 4/5.}
\label{th5pic4}
\end{figure}

\newpage
Then we sum $Theorem_5\_Sum$, $Theorem_5\_IntLagrange$ and $Theorem_5\_IntBetaLagrange$. We get the final result.
\begin{figure}[H]
\includegraphics[width=18cm, height=8cm]{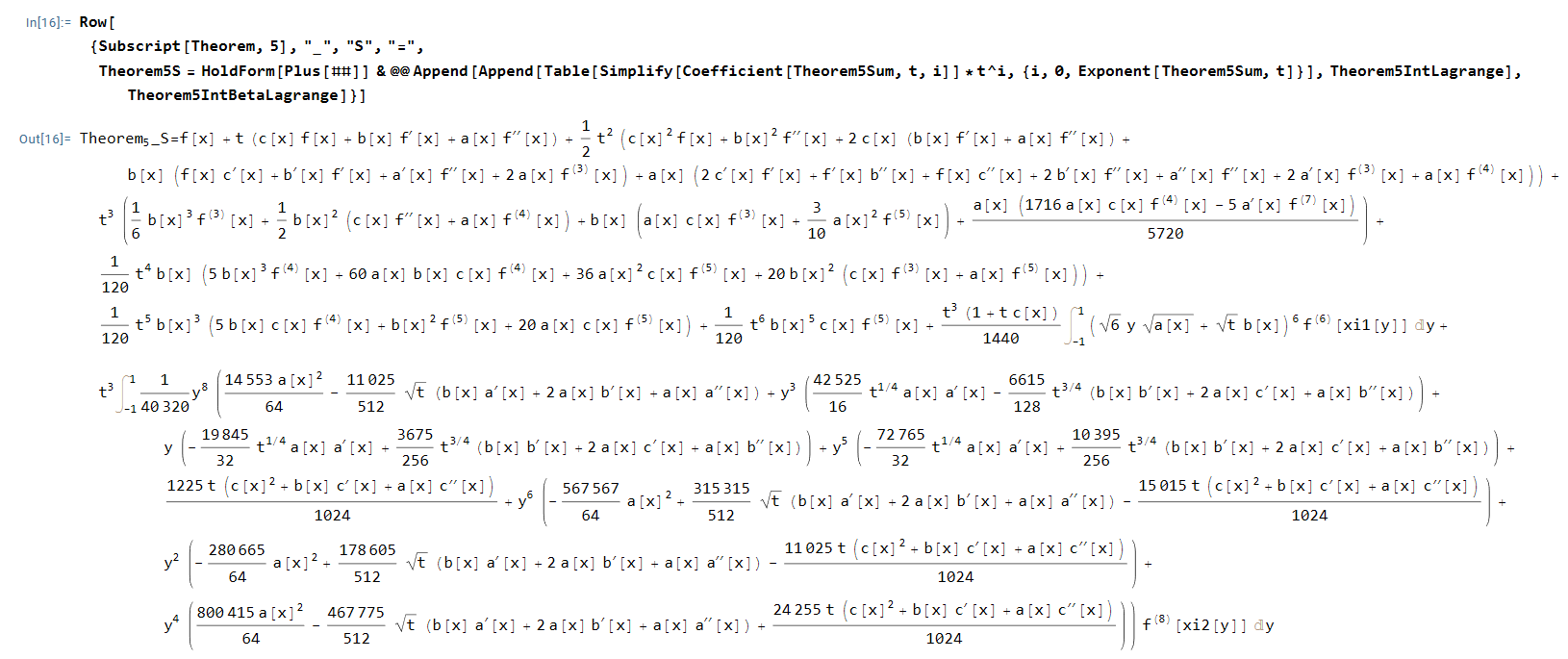}
\caption{Verification of the Chernoff function from Theorem 5 using Wolfram Mathematica 13.3. Part 5/5.}
\label{th5pic5}
\end{figure}

We have come to the following fact: the Chernoff function $S(t)$, which is defined in (\ref{Chernofffunc}) indeed satisfies item 3 of Theorem \ref{firstform}. This means that the coefficients in (\ref{Chernofffunc}) are selected in such a way that $S(t)f= f+tHf+\frac{1}{2}(tH)^2+o(t^2)$ 
as $t \to +0$ for each $f\in C_b^\infty(\mathbb{R})$.



\addcontentsline{toc}{section}{References}

\begin{thebibliography}{99}

\bibitem{Gom2019} 
C.~Batty, A.~Gomilko, Yu.~Tomilov. A Besov algebra calculus for generators of operator semigroups and related norm-estimates. // Mathematische Annalen (2019) online open access paper https://doi.org/10.1007/s00208-019-01924-2

\bibitem{BS-RFA2020} 
V.I. Bogachev, O.G. Smolyanov~O.G. Real and Functional Analysis. --- Springer. 2020. 586~p.

\bibitem{Butko2020} 
Ya.A.~Butko. The Method of Chernoff Approximation. // In: Banasiak~J., Bobrowski~A., Lachowicz~M., Tomilov~Y. (eds) Semigroups of Operators --- Theory and Applications. SOTA 2018. Springer Proceedings in Mathematics and Statistics. 2020. V.~325. P.~19-46.

\bibitem{BGS2010} Ya.A. Butko, M. Grothaus, O.G. Smolyanov. Lagrangian Feynman formulas for second-order parabolic equations in bounded and unbounded domains. // Infinite Dimensional Analyasis, Quantum Probability and Related Topics, vol. 13, No. 3 (2010), 377-392.

\bibitem{Chernoff} Paul R. Chernoff. Note on product formulas for operator semigroups. // J. Functional Analysis 2:2 (1968), 238-242. 


\bibitem{Drag2023SVMO} K. A. Dragunova, N. Nikbakht, I. D. Remizov. Numerical Study of the Rate of Convergence of Chernoff Approximations to Solutions of the Heat Equation.// Zhurnal Srednevolzhskogo matematicheskogo obshchestva. 25:4(2023), 255-272

\bibitem{KNR2026}
K.\, A.\, Katalova (Dragunova), N.\, Nikbakht, and I.\, D.\, Remizov.
\newblock {Concrete examples of the rate of convergence of Chernoff approximations: numerical results for the heat semigroup and open questions on them with full list of illustrations and Python source code.}
\newblock \emph{arXiv:2301.05284 [math.NA, math.FA]}, 2023.



\bibitem{EN2000} Engel~K.-J., Nagel~R.  One-Parameter Semigroups for Linear Evolution Equations. --- Springer-Verlag. New York. 2000. 609~p.

\bibitem{EBY}G. Evans, J. Blackledge, P. Yardley. Numerical Methods for Partial Differential Equations. --- Springer, 2000.

\bibitem{GR1}O.E.~Galkin, I.D.~Remizov. Upper and lower estimates for rate of convergence in the Chernoff product formula for semigroups of operators. // arXiv:2104.01249v2 (2021)

\bibitem{GR-IJM}O.E.~Galkin, I.D.~Remizov. Upper and lower estimates for rate of convergence in the Chernoff product formula for semigroups of operators. //  Isr. J. Math. 265, 929-943 (2025)



\bibitem{GR2}O.E.~Galkin, I.D.~Remizov. Speed of convergence of Chernoff approximations to $C_0$-semigroups of operators.// Mathematical Notes, Vol. 111, No. 2, pp. 137-139 (2022)


\bibitem{GomTom2019} A.Gomilko, S.Kosowicz, Yu.Tomilov. A general approach to approximation theory of operator semigroups// Journal de Math\'ematiques Pures et Appliqu\'ees 127 (2019) 216-267

\bibitem{Gom2014} A. Gomilko, Yu. Tomilov. On convergence rates in approximation theory for operator semigroups. //Journal of Functional Analysis 266:5 (2014), 3040-3082

\bibitem{HF} E. Hille, R.S. Phillips. Functional Analysis and Semi-groups. ---  American Mathematical Society. 1996. 819~p.

\bibitem{M2016}S. Mazumder. Numerical Methods for Partial Differential Equations. Finite Difference and Finite Volume Methods. // Academic Press, 2016

\bibitem{Zag-2} Hagen Neidhardt,
Artur Stephan and Valentin A. Zagrebnov. Remarks on the operator-norm convergence of the Trotter product
formula. // Integral Equations and Operator Theory, 90:15 (2018). 

\bibitem{Zag-3} Hagen Neidhardt, Artur Stephan, Valentin A. Zagrebnov. Operator-Norm Convergence of the Trotter Product Formula on Hilbert and Banach Spaces: A Short Survey. // Current Research in Nonlinear Analysis, pp 229-247 (2018).


\bibitem{OSS2012} Yu.N. Orlov, V.Zh. Sakbaev,  O.G. Smolyanov. Rate of convergence of Feynman approximations of semigroups generated by the oscillator Hamiltonian.// Theoretical and Mathematical Physics 172, 987–1000 (2012)

\bibitem{book2018} Daniele Antonio Di Pietro, Alexandre Ern, Luca Formaggia (Eds.). Numerical Methods for PDEs. State of the Art Techniques. --- Springer 2018.

\bibitem{Prud2020} P.S. Prudnikov. Speed of convergence of Chernoff approximations for two model examples: heat equation and transport equation// arXiv 2020

\bibitem{R-AMC} I.D. Remizov. Approximations to the solution of Cauchy problem for a linear evolution equation via the space shift operator (second-order equation example). // Applied Mathematics and Computaton 328 (2018), 243-246.

\bibitem{remizov_workshop_2018} I.  D.  Remizov.   On  estimation  of  error  in  approximations  provided  by  chernoff’s  product formula.// International  Conference  ’ShilnikovWorkshop-2018’  dedicated  to  the  memory  of outstanding  Russian  mathematician  Leonid  Pavlovich  Shilnikov  (1934-2011), book of abstracts, 38-41, 2018.

\bibitem{R-JFA2016} I.D. Remizov. Quasi-Feynman formulas -- a method of obtaining the evolution operator for the Schr\"odinger equation. // Journal of Functional Analysis 270:12 (2016), 4540-4557.

\bibitem{R-JMP2019}I.D. Remizov. Solution-giving formula to Cauchy problem for multidimensional parabolic equation with variable coefficients.// Journal of Mathematical Physics, 60:7 (2019), 071505

\bibitem{R2016} V. Ruas. Numerical Methods for Partial Differential Equations: An Introduction. --- Wiley, 2016. 300 p.

\bibitem{SmHist} O.G. Smolyanov. Feynman formulae for evolutionary equations. // Trends in Stochastic Analysis, London Mathematical Society Lecture Notes Series 353, 2009.

\bibitem{STT} O.G. Smolyanov, A.G. Tokarev, A. Truman. Hamiltonian Feynman path integrals via the Chernoff formula. // J. Math. Phys. 43, 10 (2002) 5161-5171.

\bibitem{STmzm} O. G. Smolyanov, A. Truman. Feynman Formulas for Solutions of the Schr\"odinger Equation on Compact Riemannian Manifolds. // Math. Notes, 68:5 (2000), 668-671


\bibitem{SWWcan} O.G. Smolyanov, H.v. Weizs\"acker, and O. Wittich. Brownian motion on a manifold as limit of stepwise conditioned standard Brownian motions. // Stochastic processes, Physics and Geometry: New Interplays. II: A Volume in Honour of S. Albeverio, volume 29 of Can. Math. Soc. Conf. Proc., pages 589–602. Am. Math.
Soc., 2000.

\bibitem{SWWdan} O.G. Smolyanov, H.v. Weiz\"sacker, O. Wittich. Diffusion on compact Riemannian manifolds, and surface measures. // Doklady Math. 2000. V. 61. P. 230-234.


\bibitem{VVGKR2020} A.V. Vedenin, V.S. Voevodkin, V.D. Galkin, E.Yu. Karatetskaya, I.D. Remizov. Speed of Convergence of Chernoff Approximations to Solutions of Evolution Equations// Mathematical  Notes 108(3) 451-456 (2020)

\bibitem{Zag-1} V.A.Zagrebnov. Comments on the Chernoff $\sqrt{n}$-lemma. // In book [Series of Congress Reports: Functional Analysis and Operator Theory for Quantum Physics, The Pavel Exner Anniversary Volume. Jaroslav Dittrich, Hynek Kova\v{r}\'ik, Ari Laptev (eds). European Mathematical Society, 2017] pp. 564-573, 2017.



\bibitem{Zag-JFA2020} V.A. Zagrebnov. Notes on the Chernoff product formula.//
Journal of Functional Analysis  279:7 (2020) 108696

\end{thebibliography}
\end{document}